\DeclareMathOperator{\exx}{\textsl{ex}}
\DeclareMathOperator{\dcut}{\textsl{d}_{\square}}
\DeclareMathOperator{\Forb}{\textsl{Forb}}
\def\Forbhom^*{\Forb_{\textsl{hom}}^*} \def\ex{\exx^*}
\DeclareMathOperator{\TOWER}{{\tt TOWER}}
\DeclareMathOperator{\poly}{{\tt poly}}
\DeclareMathOperator{\dist}{\textsl{d}_1}
 \DeclareMathOperator{\homto}{\to}
\let\hom\relax \DeclareMathOperator{\hom}{\textsl{hom}}
\def\hat{\widehat}
\def\bar#1{\overline{#1}}
\def\eps{\varepsilon} \def\phi{\varphi} \def\cP{\mathcal{P}}
\def\cV{\mathcal{V}} \def\cU{\mathcal{U}} 
  \def\cR{\mathcal{R}}
\def\cG{\mathcal{G}} \def\cH{\mathcal{H}} \def\cF{\mathcal{F}}
\def\RR{\mathbb{R}} \def\NN{\mathbb{N}}  
\def\PP{\mathbb{P}} \def\({\left(} \def\){\right)} \def\>{\rangle}
\def\z{z_{\mathcal{F}}}
\def\<{\langle}
\let\epsilon\varepsilon
\newcommand{\quot}[2]{\mathchoice%
{\left.\raisebox{.1em}{$\displaystyle{#1}$}\kern-1pt/\raisebox{-.2em}{$\displaystyle{#2}$}\right.}
{\left.\raisebox{.1em}{${#1}$}\kern-1pt/\raisebox{-.2em}{${#2}$}\right.}
{\left.\raisebox{.1em}{$\scriptstyle{#1}$}\kern-1pt/\raisebox{-.2em}{$\scriptstyle{#2}$}\right.}
{\left.\raisebox{.1em}{$\scriptscriptstyle{#1}$}\kern-1pt/\raisebox{-.2em}{$\scriptscriptstyle{#2}$}\right.}
} \newcommand{\etal}{\textsl{et al.}}
\date{\today, \currenttime}
\title{Estimating parameters associated with monotone properties}
\author[Hoppen]{Carlos Hoppen}
\address{Instituto de Matem\'atica, UFRGS, Avenida Bento
  Gon\c{c}alves, 9500, 91501-970 Porto Alegre, RS, Brazil {\rm(C. Hoppen)}} 
\email{choppen@ufrgs.br}
\author[Kohayakawa]{Yoshiharu Kohayakawa}
\address{Instituto de Matem\'atica e Estat\'istica, Universidade de
  S\~{a}o Paulo, Rua do Mat\~{a}o 1010, 05508-090 S\~{a}o Paulo,
  Brazil {\rm(Y. Kohayakawa and H. Stagni)}} 
\email{\{yoshi|stagni\}@ime.usp.br}
\author[Lang]{Richard Lang}
\address{Departamento de Ingenier\'ia Matem\'atica, Universidad de Chile, Beauchef 851, 
Torre Norte, Piso 4, Oficina 415, Santiago, Chile {\rm(R. Lang)}}
\email{rlang@dim.uchile.cl}
\author[Lefmann]{Hanno Lefmann} 
\address{Fakult\"at f\"ur Informatik, Technische
Universit\"at Chemnitz, Stra\ss{}e der Nationen 62, 09111 Chemnitz,
Germany {\rm(H. Lefmann)}}
\email{lefmann@informatik.tu-chemnitz.de}
\author[Stagni]{Henrique Stagni}
\thanks{C. Hoppen acknowledges the support of
    FAPERGS~(Proc.\,2233-2551/14-9) and CNPq (Proc.~448754/2014-2
    and~308539/2015-0). C. Hoppen and H. Lefmann acknowledge the
    support of CAPES and DAAD via PROBRAL (CAPES Proc.~408/13 and
    DAAD~57141126 and~57245206).  C. Hoppen,
    Y. Kohayakawa and H. Stagni thank FAPESP (Proc.~2013/03447-6) and
    NUMEC/USP (Project MaCLinC/USP) for their support. Y.  Kohayakawa
    was partially supported by FAPESP (2013/07699-0) and CNPq
    (310974/2013-5 and~459335/2014-6). H. Stagni was supported by
    FAPESP (2015/15986-4) and CNPq (141970/2015-4 and~459335/2014-6).
    R. Lang acknowledges support by Millennium Nucleus Information and Coordination in Networks ICM/FIC RC130003.
     Some results in this paper appeared in preliminary form in the
     Proceedings of APPROX/RANDOM 2016~\cite{hoppen_et_al:LIPIcs:2016:6658}.}
\begin{document} 
\onehalfspace
\allowdisplaybreaks[2]
\footskip=28pt
\newtheorem{theorem}{Theorem}[section]
\newtheorem{cor}[theorem]{Corollary}
\newtheorem{lemma}[theorem]{Lemma}
\newtheorem{fact}[theorem]{Fact}
\newtheorem{property}[theorem]{Property}
\newtheorem{corollary}[theorem]{Corollary}
\newtheorem{proposition}[theorem]{Proposition}
\newtheorem{claim}[theorem]{Claim}
\newtheorem{conjecture}[theorem]{Conjecture}
\newtheorem{definition}[theorem]{Definition}
\theoremstyle{definition}
\newtheorem{example}[theorem]{Example}
\newtheorem{remark}[theorem]{Remark}

\begin{abstract} There has been substantial interest in estimating the value of
a graph parameter, i.e., of a real-valued function defined on the set of finite
graphs, by querying a randomly sampled substructure whose size is independent of
the size of the input. Graph parameters that may be successfully estimated in
this way are said to be \emph{testable} or \emph{estimable}, and the
\emph{sample complexity} $q_z=q_z(\eps)$ of an estimable parameter $z$ is the
size of a random sample of a graph $G$ required to ensure that the value of
$z(G)$ may be estimated within an error of $\eps$ with probability at least
2/3. In this paper, for any fixed monotone graph property $\cP=\Forb(\cF)$,
we study the sample complexity of estimating a bounded graph parameter
$z_{\cF}$ that, for an input graph $G$, counts the number of 
\emph{spanning}
subgraphs of $G$ that satisfy $\cP$. 
To improve upon previous upper bounds on the sample
complexity, we show that the vertex set of any graph that satisfies a
monotone property $\cP$ may be partitioned equitably into a constant number of
classes in such a way that the cluster graph induced by the partition is not
far from satisfying a natural weighted graph generalization of $\cP$.
Properties for which this holds are said to be \emph{recoverable}, and the
study of recoverable properties may be of independent interest.
\end{abstract}
\maketitle

\section{Introduction and main results}

In the last two decades, a lot of effort has been put into finding constant-time
randomized algorithms (conditional on sampling) to gauge whether a combinatorial
structure satisfies some property, or to estimate the value of some numerical
function associated with this  structure. In this paper, we focus
on the graph case and, as usual, we consider algorithms that have the ability to
query whether any desired pair of vertices in the input graph is adjacent or
not. Let $\cG$ be the set of finite simple graphs and let $\cG(V)$ be the set of
such graphs with vertex set $V$. We shall consider subsets $\cP$ of~$\cG$ that
are closed under isomorphism, which we call \emph{graph properties}. To
avoid technicalities, we restrict ourselves to graph properties $\cP$ such that $\cP \cap
\cG(V) \neq \emptyset$ whenever $V \neq \emptyset$. For instance, this includes all nontrivial
\emph{monotone} and \emph{hereditary} graph properties, which are graph
properties that are inherited by subgraphs and by induced subgraphs,
respectively. Here, we will focus on monotone properties. The prototypical
example of a monotone property is $\Forb(F)$, the class of all graphs that do
not contain  a copy of a fixed graph $F$.
More generally, if $\cP$ is a
monotone property and $\cF$ contains all minimal graphs that are not in $\cP$,
then the graphs that lie in $\cP$ are precisely those that  do not contain a copy of an element of $\cF$.
This class of graphs will be denoted by $\cP=\Forb(\cF)$. The elements of
$\Forb(\cF)$ are said to be \emph{$\cF$-free}.

 A graph property $\mathcal{P}$ is said to be \emph{testable} if, for every
 $\eps>0$, there exist a positive integer $q_\cP=q_\cP(\eps)$, called the
 \emph{query complexity}, and a randomized algorithm $\mathcal{T}_\mathcal{P}$,
 called a \emph{tester}, which may perform at most $q_\cP$ queries in the input
 graph, satisfying the following property. For an $n$-vertex input graph
 $\Gamma$, the algorithm $\mathcal{T}_\mathcal{P}$ distinguishes with
 probability at least $2/3$ between the cases in which $\Gamma$ satisfies
 $\mathcal{P}$ and in which $\Gamma$ is $\eps$-far from satisfying
 $\mathcal{P}$, that is, in which no graph obtained from $\Gamma$ by the
 addition or removal of at most $\eps n^2/2$ edges satisfies $\mathcal{P}$.  This
 may be stated in terms of graph distances: given two graphs $\Gamma$ and
 $\Gamma'$ on the same vertex set $V(\Gamma) = V(\Gamma')$, we may define the \emph{normalized edit distance} between
 $\Gamma$ and $\Gamma'$ by $\dist(\Gamma,\Gamma')=\frac{2}{|V|^2}
 \left|E(\Gamma) \triangle E(\Gamma')\right|$, where $E(\Gamma) \triangle
 E(\Gamma')$ denotes the symmetric difference of their edge sets. If $\cP$ is a graph property, we
 let the distance between a graph $\Gamma$ and $\cP$ be
 \begin{equation*}
 \dist(\Gamma,\cP)=\min\{\dist(\Gamma,\Gamma')\colon
 V(\Gamma')=V(\Gamma) \textrm{ and }\Gamma'\in \cP\}.  
 \end{equation*} 
For instance, if $\Gamma=K_n$ and $\cP=\Forb(K_3)$, Tur\'{a}n's Theorem ensures that $\binom{n}{2}-\lfloor n^2/4 \rfloor$ 
edges need to be removed to produce a graph that is $K_3$-free. In particular, $\dist(K_n,\Forb(K_3)) \rightarrow 1/2$. 
Thus a graph property $\cP$ is testable if
there is a tester with bounded query complexity
 that distinguishes with probability at least $2/3$ between the cases
 $\dist(\Gamma,\cP)=0$ and $\dist(\Gamma,\cP) > \eps$.

The systematic study of property testing was initiated by Goldreich, Goldwasser
and Ron~\cite{Goldreich:1998:PTC:285055.285060}, and there is a very rich
literature on this topic. For instance, regarding testers, Goldreich and
Trevisan~\cite{Goldreich:2003:TTR:897701.897703} showed that it is sufficient to
consider simpler canonical testers, namely those that randomly choose a subset
$X$ of vertices in $\Gamma$ and then verify whether the induced subgraph
$\Gamma[X]$ satisfies some related property $\mathcal{P'}$. For example, if the
property being tested is having edge density $1/2$, then the algorithm will
choose a random subset $X$ of appropriate size and check whether the edge
density of $\Gamma[X]$ is within, say, $\eps/2$ of $1/2$. Regarding testable
properties, Alon and Shapira~\cite{AS08} proved that every monotone graph
property is testable, and, more generally, that the same holds for hereditary
graph properties~\cite{AS2008}. For more information about property testing, we
refer the reader to~\cite{DBLP:conf/propertytesting/2010}
and the references therein.

In a similar vein, a function $z \colon \cG\to\RR$ from the set $\cG$ of finite
graphs into the real numbers is called a \emph{graph parameter}  if it is
invariant under relabeling of vertices. 
A graph parameter $z\colon \cG\to\RR$ is \emph{estimable} if for every $\eps>0$ and every large enough graph $\Gamma$ with probability at least $2/3$, the value of $z(\Gamma)$ can be approximated up to an additive error of $\eps$ by an algorithm that only has access to a subgraph of $\Gamma$ induced by a set of vertices of size $q_z=q_z(\eps)$, chosen uniformly at random. 
The query complexity of such an algorithm is $\binom{q_z}2$ and the size $q_z$ is called
its \emph{sample complexity}.  Estimable parameters have been considered
in~\cite{FN07} and were defined in the above level of generality in~\cite{Borgs2008}. They are often called \emph{testable parameters}. Borgs
et al.~\cite[Theorem 6.1]{Borgs2008} gave a complete characterization of the
estimable graph parameters which, in particular, also implies that the distance
from monotone graph properties is estimable. Their work uses the concept of graph
limits and does not give explicit bounds on the query complexity required for
this estimation.

We obtain  results for   the bounded graph parameter, which, for a
graph family $\cF$, counts the number of $\cF$-free spanning
subgraphs of the input graph $\Gamma$. Recall that $G'= (V', E')$ is a \emph{spanning subgraph} of a graph $G=(V,E)$ if $V'=V$ and $E' \subseteq E$.

Formally, given a graph $\Gamma \in\cG$ and a family $\cF$ of graphs,
we denote the set of all $\cF$-free spanning subgraphs of $\Gamma$ by~$\Forb(\Gamma,\cF)
= \{\mbox{$G$ is a spanning subgraph of $\Gamma$: } G\in\Forb(\cF) \}$, and we consider the parameter
 \begin{equation}\label{eq:thm2}
 \z(\Gamma)=\frac{1}{|V(\Gamma)|^2}\log_2 |\Forb(\Gamma,\cF)|.
 \end{equation} 
    For example, if $\cF=\{K_3\}$ and $\Gamma=K_n$, computing $\z$ requires
    estimating the number of $K_3$-free subgraphs of $K_n$ up to a multiplicative
    error of $2^{o(n^2)}$: 
    \begin{equation*}
    \z(K_n)=\frac{1}{n^2}\log_2 |\Forb(\Gamma,\cF)|=  \frac{1}{n^2}
    \log_2 2^{\frac12 \binom{n}{2}+o(n^2)} \rightarrow \frac{1}{4}.  
    \end{equation*}
    This was done by
    Erd\H{o}s, Kleitman and Rothschild for $\cF=\{K_k\}$~\cite{EKR1976}, see also
    Erd\H{o}s, Frankl and R\"{o}dl~\cite{EFR1986} for $F$-free subgraphs. 
    Counting problems of this type were studied by several people. Consider for
    instance, the work of Pr\"{o}mel and Steger~\cite{PS92,PS96}, the logarithmic density in Bollob\'{a}s~\cite{Bollobas1998}, 
    and some more recent results about the number of $n$-vertex graphs avoiding  copies of 
    some fixed forbidden graphs~\cite{BBS2004,BS2011}. Algorithmic aspects have been investigated by Duke, Lefmann and R\"{o}dl~\cite{DLR1996} and, quite recently, by Fox, Lov\'{a}sz and Zhao~\cite{FLZ2017}.

    As it turns out, estimating graph parameters $\z(\Gamma)$   is related
    to estimating distances of graphs from the corresponding graph
    property $\cP=\Forb(\cF)$.  Alon, Shapira and Sudakov~\cite[Theorem
    1.2]{ASS09} proved that the distance to every monotone graph property $\cP$ is
    estimable using a natural algorithm, which simply computes the distance
    from the induced sampled graph to~$\cP$. However, one disadvantage of this
    approach is that the accuracy of the estimate relies heavily on stronger versions of 
    Szemer\'{e}di's Regularity Lemma~\cite{Sze76,Alon2000}. Therefore, 
    the query complexity is at least of the order  $\TOWER(\poly(1/\eps))$, by which we mean a
    tower of twos  of height that is polynomial in $1/\eps$. Moreover, it follows 
    from a result of Gowers~\cite{Gowers97} that any approach based on 
    Szemer\'{e}di's Regularity Lemma cannot lead to a bound that is better
    than~$\TOWER(\poly(1/\eps))$.

    In this paper, we introduce the concept of \emph{recoverable} graph properties. Roughly speaking, given a function
    $f\colon (0,1]\to\RR$, we say that a graph property $\cP$ is $f$-recoverable if every
    large graph $G\in\cP$ is $\eps$-close to admitting a partition $\cV$
    of its vertex set into at most $f(\eps)$ classes that witnesses membership in
    $\cP$, i.e., such that any graph that can be partitioned in the same way must be
    in $\cP$. 

    \begin{theorem}\label{thm:2}
        Let $\Forb(\cF)$ be an $f$-recoverable graph property, for some
        function $f\colon (0,1]\to\RR$. Then, for all $\eps>0$ there is $n_0$ such that, for any graph $\Gamma$ with $|V(\Gamma)|\geq n_0$, the graph parameter
        $\z$ defined in~\eqref{eq:thm2}
        can be estimated within an
        additive error of $\eps$ with sample complexity $\poly(f(\eps/6)/\eps)$.
    \end{theorem}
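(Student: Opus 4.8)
The plan is to estimate $\z(\Gamma)$ by sampling a set $X$ of $q = \poly(f(\eps/6)/\eps)$ vertices uniformly at random and outputting $\z(\Gamma[X])$. The key observation is that, by the definition of recoverability, both $\z(\Gamma)$ and $\z(\Gamma[X])$ are essentially governed by the same ``coarse'' combinatorial object: a partition of the vertex set into at most $f(\eps/6)$ classes together with the collection of bipartite/diagonal ``patterns'' (the pairs of classes that are allowed to carry edges, as dictated by the property) witnessing membership in $\cP=\Forb(\cF)$. I would first show a \emph{counting lemma} of the following shape: if $V(\Gamma)$ admits a partition $\cV$ into $k$ classes whose cluster graph is $\delta$-close to satisfying the weighted generalization of $\cP$, then $\log_2 |\Forb(\Gamma,\cF)|$ is, up to $O(\delta)\,|V(\Gamma)|^2$ error, equal to the maximum over admissible edge-patterns of $\sum_{ij}\log_2(\text{number of ways to fill block }(i,j))$. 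Concretely, a spanning $\cF$-free subgraph is obtained (up to a small correction) by choosing, inside each block of $\cV$, an arbitrary subgraph of the corresponding block of $\Gamma$, subject to the pattern constraints; counting these gives a quantity that depends only on the \emph{block densities} $d_\Gamma(V_i,V_j)$ and the list of admissible patterns. Since block densities are exactly what a random sample of size $\poly(k/\eps)$ estimates well (a Chernoff/sampling argument, or an application of the weak regularity/sampling results already implicit in the $\ex^*$ and $\dcut$ machinery of the paper), the same estimate holds for $\Gamma[X]$.

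The steps, in order, are: (1) fix $\eps$, set $\delta = \eps/6$, and invoke $f$-recoverability to know that any $\cF$-free spanning subgraph of $\Gamma$ — and in particular the ``extremal'' ones achieving close to $|\Forb(\Gamma,\cF)|$ — is $\delta$-close to having a witnessing partition into $k\le f(\delta)$ parts; use this to derive the upper and lower bounds on $\z(\Gamma)$ in terms of $\max_{\text{patterns}}\; g(\{d_\Gamma(V_i,V_j)\})$ where $g$ is the explicit block-counting function above, with additive slack $O(\delta)$. (2) Prove the analogous two-sided bound for $\z(\Gamma[X])$ with the \emph{induced} partition $\cV\cap X$: here I must check that restricting the witnessing partition to $X$ is still (with high probability) a $2\delta$-close witness, which follows because the property $\cP$ is monotone and the patterns are preserved under taking induced subgraphs of the cluster graph, and because equitability is approximately preserved under random restriction. (3) Use a sampling lemma: for $q = \poly(k/\eps)$, with probability $\ge 2/3$ all the block densities $d_{\Gamma[X]}(V_i\cap X, V_j\cap X)$ are within $\eps'$ of $d_\Gamma(V_i,V_j)$ simultaneously, and the induced partition is nearly equitable; since $g$ is Lipschitz in the densities with a controlled constant (each $\log_2$ of a binomial-type count changes by $O(\eps')$ per block when a density moves by $\eps'$), we get $|g_X - g| = O(\eps')$. (4) Combine (1)–(3): choosing $\eps'$ a small multiple of $\eps$ and $k = f(\eps/6)$ gives $|\z(\Gamma) - \z(\Gamma[X])| < \eps$ with probability at least $2/3$, and the sample size used is $\poly(f(\eps/6)/\eps)$ as claimed; set $n_0$ large enough that all ``large graph'' hypotheses in the recoverability definition and the sampling estimates apply.

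The main obstacle I expect is step (1) — the counting lemma translating $|\Forb(\Gamma,\cF)|$ into a pattern-optimized product of block counts with only $O(\delta)|V|^2$ additive error in the exponent. The upper bound direction (every $\cF$-free spanning subgraph is ``almost'' captured by some admissible pattern on the witnessing partition, so it is counted with a multiplicative $2^{O(\delta)|V|^2}$ overcount from the edges we ignored inside/between non-pattern blocks and from the choice of partition) is the delicate part: one must argue that the number of \emph{distinct} witnessing partitions and patterns is only $2^{o(|V|^2)}$ and that an $\cF$-free subgraph cannot have many edges in ``forbidden'' blocks without violating $\cF$-freeness, which is precisely where recoverability (rather than mere monotonicity) is used. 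The matching lower bound is comparatively routine: any pattern-respecting choice of block subgraphs yields an $\cF$-free graph up to deleting $o(|V|^2)$ edges, so one exhibits $2^{g - o(|V|^2)}$ such subgraphs directly. The sampling and Lipschitz estimates in steps (2)–(3) are standard concentration arguments and I do not anticipate difficulty there beyond bookkeeping the constants so that the final bound reads $\poly(f(\eps/6)/\eps)$.
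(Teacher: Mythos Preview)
Your overall architecture matches the paper: express $\z(\Gamma)$ as (approximately) an optimization over cluster graphs with at most $K=f(\poly(\eps))$ parts --- this is the paper's Theorem~\ref{th:cardinality}, and your discussion of the ``counting lemma'' and its delicate upper-bound direction is accurate --- and then transfer that optimization to the sample. Where your proposal has a real gap is in the transfer step.

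Your steps (2)--(3) control only the \emph{restriction} of a fixed partition $\cV$ of $V(\Gamma)$ to the sample $X$: Chernoff gives that block densities $d_{\Gamma[X]}(V_i\cap X,V_j\cap X)$ are close to $d_\Gamma(V_i,V_j)$, hence $g$ evaluated on $\cV\cap X$ is close to $g$ evaluated on $\cV$. This yields $\z(\Gamma[X])\geq \z(\Gamma)-\eps$. But it does \emph{not} give the reverse inequality. To bound $\z(\Gamma[X])$ from above, the counting lemma applied to $\Gamma[X]$ produces a maximum over \emph{all} equipartitions $\bar\cV$ of $X$, and you must show that for each such $\bar\cV$ there is a partition $\cV$ of $V(\Gamma)$ whose cluster graph is close to $\quot{\Gamma[X]}{\bar\cV}$. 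There is no natural way to ``lift'' an arbitrary partition of $X$ to one of $V(\Gamma)$, and a union bound over the $K^n$ partitions of $V(\Gamma)$ is far too expensive for a sample of size $\poly(K/\eps)$. Your remark that ``restricting the witnessing partition to $X$ is still a witness'' does not help here: the witnessing partitions arising from subgraphs of $\Gamma[X]$ are partitions of $X$, not restrictions from $V(\Gamma)$.

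The paper handles exactly this via Lemma~\ref{lemma:sample}(2), whose proof is not a concentration argument at all: it invokes the Fischer--Matsliah--Shapira testability of partition properties together with the Goldreich--Trevisan canonical-tester reduction to conclude that if $\Gamma[X]$ has a cluster graph near some $R$, then (with high probability) $\Gamma$ does too. You should either invoke this machinery or supply an alternative argument for the lifting direction; the ``standard concentration'' you anticipate in (3) is insufficient.
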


Although one could apply strong versions of regularity to show
that every monotone property $\Forb(\cF)$ is $f$-recoverable, this approach 
would provide an upper bound of at least $\TOWER(\poly(\eps^{-1}))$ for
the function $f$.  We find a connection between this notion of recoverability and the 
graph Removal Lemma, which can lead to better bounds for the function
$f(\eps)$.
The Removal Lemma was first stated explicitly in the literature by Alon
 \etal~\cite{ADLRY1994}  and by F\"{u}redi~\cite{Furedi1995}. The following version, which holds 
 for arbitrary families of graphs was first proven in \cite{AS08}.
\begin{lemma}[Removal Lemma]\label{lemma:removal} 
For every~$\eps > 0$ and every (possibly infinite) family~$\cF$ of graphs, there
exist~$M = M(\eps,\cF)$, $\delta = \delta(\eps,\cF)> 0$ and~$n_0=n_0(\eps,\cF)$
such that the following holds. If a graph~$G$ on~$n \geq n_0$ vertices
satisfies~$\dist(G,\Forb(\cF))\geq \eps$, then there is~$F\in\cF$ with~$|V(F)|\leq
M$ such that $G$ has at least $\delta n^{|V(F)|}$ copies of $F$. $\hfill\qed$
\end{lemma}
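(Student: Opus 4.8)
The plan is to replace the exponential count in $\z(\Gamma)$ by a finitely parametrised ``first moment'' that recoverability confines to boundedly many classes and that is estimable from a polynomial-size vertex sample. Write $n=|V(\Gamma)|$, let $\exx(\Gamma,\cF)$ be the largest number of edges in an $\cF$-free spanning subgraph of $\Gamma$, and set $k=f(\eps/6)$. Given a partition $\cV=(V_1,\dots,V_t)$ of a vertex set into $t\le k$ classes and a graph $W$ on $[t]$ with loops allowed, let $B_W(\cV)$ join $u\in V_i$ to $v\in V_j$ exactly when $ij\in E(W)$ (for $i\ne j$), or $i=j$ with $i$ looped; call $W$ \emph{admissible} if every such blow-up lies in $\Forb(\cF)$ (the combinatorial counterpart of $W$ lying in the weighted-graph version of $\Forb(\cF)$), so that $\Gamma\cap B_W(\cV)$ is then automatically $\cF$-free. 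Set
\[
\Psi_k(H)=\frac{1}{|V(H)|^{2}}\max\bigl\{\,e\bigl(H\cap B_W(\cV)\bigr)\ :\ \cV\text{ partitions }V(H)\text{ into }\le k\text{ classes},\ W\text{ admissible}\,\bigr\}.
\]
The estimator samples a set $X$ of $q=\poly(k/\eps)$ vertices of $\Gamma$ uniformly at random, queries all pairs inside $X$, and outputs $\Psi_k(\Gamma[X])$; since there are at most $2^{k^{2}}$ patterns, each optimised over boundedly many partitions, this is computable from $\Gamma[X]$, and the sample complexity is $\poly(f(\eps/6)/\eps)$. It remains to prove $|\Psi_k(\Gamma[X])-\z(\Gamma)|\le\eps$ with probability at least $2/3$ once $n\ge n_0$, which I would obtain from three estimates of error below $\eps/6$ each.

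The first is a counting lemma, $\z(\Gamma)=\exx(\Gamma,\cF)/n^{2}+o(1)$. The lower bound is immediate, since all $2^{\exx(\Gamma,\cF)}$ subgraphs of a largest $\cF$-free spanning subgraph of $\Gamma$ are $\cF$-free by monotonicity. For the upper bound I fix a small $\delta>0$ and apply recoverability to each $G\in\Forb(\Gamma,\cF)$: $G$ is $\delta$-close to a graph admitting a partition into $\le f(\delta)$ classes witnessed by an admissible pattern, hence $G\subseteq B_W(\cV)\cup D$ for one of at most $2^{o(n^{2})}$ pairs $(W,\cV)$ and one of at most $2^{(H(\delta)/2+o(1))n^{2}}$ graphs $D$ with $e(D)\le\delta n^{2}/2$ (here $H$ is the binary entropy function). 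Since $G\subseteq\Gamma$, the part of $G$ inside $B_W(\cV)$ is a subgraph of the $\cF$-free graph $\Gamma\cap B_W(\cV)$, which has at most $\exx(\Gamma,\cF)$ edges, so $|\Forb(\Gamma,\cF)|\le 2^{\exx(\Gamma,\cF)+(H(\delta)/2+o(1))n^{2}}$; letting $\delta\to0$ after $n\to\infty$ proves the lemma, and in particular $|\z(\Gamma)-\exx(\Gamma,\cF)/n^{2}|\le\eps/6$ for $n\ge n_0$.

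The second estimate applies recoverability once more, with parameter $\eps/6$, to a largest $\cF$-free spanning subgraph $G^{\star}$ of $\Gamma$, so $e(G^{\star})=\exx(\Gamma,\cF)$: it is $(\eps/6)$-close to a graph with an $\le k$-class partition $\cV$ witnessed by an admissible $W$, giving $e(\Gamma\cap B_W(\cV))\ge e(G^{\star}\cap B_W(\cV))\ge\exx(\Gamma,\cF)-\eps n^{2}/12$, while conversely $e(\Gamma\cap B_{W'}(\cV'))\le\exx(\Gamma,\cF)$ for every admissible $(W',\cV')$; hence $|\Psi_k(\Gamma)-\exx(\Gamma,\cF)/n^{2}|\le\eps/12$. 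The third estimate is the sampling step: for each fixed admissible pattern $W$ on $[t]$, $t\le k$, the optimum $\max_{\cV}e(\Gamma\cap B_W(\cV))/n^{2}$ is the value of a graph partition problem (a Max-2-CSP over alphabet $[t]$), and such values are approximable within $\eps/24$ from a uniform sample of $\poly(k/\eps)$ vertices, by the random-sampling results of Goldreich, Goldwasser and Ron and of Alon, de~la~Vega, Kannan and Karpinski; a union bound over the at most $2^{k^{2}}$ patterns, after amplifying each sub-estimate (costing a $\poly(k)$ factor in the sample), gives $|\Psi_k(\Gamma[X])-\Psi_k(\Gamma)|\le\eps/6$ with probability at least $2/3$. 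The triangle inequality then yields $|\Psi_k(\Gamma[X])-\z(\Gamma)|\le\eps/6+\eps/12+\eps/6<\eps$ with probability at least $2/3$, the threshold $n_0$ absorbing the counting-lemma error and the size requirement of recoverability.

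The principal difficulty lies in the first and third steps: one must set up the finitely parametrised $\Psi_k$ --- equivalently the ``weighted $\exx$'' attached to the weighted-graph version of $\Forb(\cF)$ --- so that it is genuinely sandwiched around $\z(\Gamma)$ with an error governed by $f$ alone plus a vanishing term, and then check that this quantity falls within the scope of polynomial-sample estimability for partition problems, the union bound over patterns being kept under control. This is exactly where recoverability, rather than a regularity-based structure theorem, produces the $\poly(f(\eps/6)/\eps)$ bound: a regularity-based argument would be forced to push $\delta$ down to scale $\eps$ and thereby insert a tower into the sample complexity, whereas here the $H(\delta)$-type loss is harmless because it is confined to the counting lemma (where $\delta$ may be taken arbitrarily small independently of the algorithm) and never enters the quantity computed from the sample.
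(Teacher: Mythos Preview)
Your proposal does not address the stated lemma. Lemma~\ref{lemma:removal} is the graph Removal Lemma: if a graph $G$ is $\eps$-far from $\Forb(\cF)$, then $G$ contains at least $\delta n^{|V(F)|}$ copies of some $F\in\cF$ with $|V(F)|\le M$. The paper does not prove this statement; it is quoted from Alon and Shapira~\cite{AS08} and closed with a $\qed$ immediately after the statement. There is nothing to compare against in the paper, and nothing in your write-up attempts it either.

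What you have actually sketched is a proof of Theorem~\ref{thm:2}: that $\z$ is estimable with sample complexity $\poly(f(\eps/6)/\eps)$ when $\Forb(\cF)$ is $f$-recoverable. Every object you introduce --- $\z(\Gamma)$, $\exx(\Gamma,\cF)$, the template parameter $\Psi_k$, recovering partitions, the sampling results for partition problems --- pertains to that theorem, not to the Removal Lemma. You never assume a graph is far from $\Forb(\cF)$, and you never exhibit many copies of a forbidden subgraph; the conclusion you reach, $|\Psi_k(\Gamma[X])-\z(\Gamma)|\le\eps$ with probability at least $2/3$, is simply a different statement.

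If your target was indeed Theorem~\ref{thm:2}, your outline is in the same spirit as the paper's route through Theorem~\ref{th:cardinality} and Theorem~\ref{th:estimatequotfunc}: approximate $\z$ by an extremal density over bounded-size cluster structures, use recoverability for the counting step, and then sample. The main differences are that you work with unweighted ``admissible'' patterns $W$ and blow-ups $B_W(\cV)$ in place of the paper's weighted cluster graphs in $\Forbhom^*(\cF)$, and you appeal directly to the GGR/AVKK partition-problem sampling bounds with a union bound over the $2^{k^2}$ patterns, whereas the paper proves its own sampling lemma (Lemma~\ref{lemma:sample}) via Fischer--Matsliah--Shapira. But none of this bears on Lemma~\ref{lemma:removal}.
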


Conlon and Fox~\cite{ConlonFox2012} showed that Lemma~\ref{lemma:removal} holds 
with $\delta^{-1},n_0 \leq \TOWER(\poly(\eps^{-1}))$. Although this remains the best 
known bound for the general case, there are many families $\cF$ for 
which Lemma~\ref{lemma:removal} holds with a significantly better
dependency on $\eps$.
For families $\cF=\{F\}$ where $F$ is an arbitrary graph, 
Fox~\cite{Fox2011} (see also~\cite{moshkovitz16:_SRAL}) showed that 
Lemma~\ref{lemma:removal} holds with both~$\delta^{-1}$ and~$n_0$ 
bounded by $\TOWER(O(\log(\eps^{-1})))$ --- as a consequence, this same
bound holds for every \emph{finite} family~$\cF$.
Moreover if $F$ is 
bipartite, than $\delta^{-1}$ and $n_0$ are polynomial in $\eps^{-1}$
and, though it is not possible to get polynomial bounds
when $F$ is not bipartite (see~\cite{Alon2002}), the best known lower bound
for $\delta^{-1}$ is only quasi-polynomial in $\eps^{-1}$. 
Lemma~\ref{lemma:removal} also holds with $\delta^{-1},M,n_0\leq
\poly(\eps^{-1})$ for certain \emph{infinite} families $\cF$. 
For instance, results from~\cite{Goldreich:1998:PTC:285055.285060} 
provide such polynomial bounds when
$\Forb(\cF)$ is the property of ``being $k$-colorable'' (for every
positive integer $k$) or the property of ``having a bisection of size at
most $\rho n^2$'' (for every $\rho>0$) or many other properties that can 
be expressed as ``partition problems''.
    
We show that every monotone graph property $\Forb(\cF)$ is $f$-recoverable for some function $f$ that is only exponential in the
bounds given by the Removal Lemma for the family $\cF$. In fact, we use a weighted version of this lemma (see Lemma~\ref{lemma:rem}).
\begin{theorem}\label{th:recoverable}
For every family~$\cF$ of graphs, the property~$\Forb(\cF)$ is
$f$-recoverable for~$f(\eps) = n_0 2^{\poly(M/\delta)}$,  where $\delta,M$ and
$n_0$ are as in Lemma~\ref{lemma:rem}
with input $\cF$ and $\eps$.
\end{theorem}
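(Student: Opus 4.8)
The plan is to show that every large $G\in\Forb(\cF)$ is within edit distance $\eps$ of a graph $G'$ carrying an equitable partition $\cV$ with $n_0\le|\cV|\le n_0\,2^{\poly(M/\delta)}$ classes whose weighted cluster graph lies exactly in the natural weighted graph generalization of $\Forb(\cF)$ (roughly: vanishing internal weights and a support graph admitting no homomorphic image of any $F\in\cF$), so that every graph with the corresponding blow-up pattern is $\cF$-free and $\cV$ witnesses $G'\in\Forb(\cF)$. The engine is a Frieze--Kannan index increment driven not by irregularity of $G$ but by an excess of homomorphic copies of a forbidden graph in the current cluster graph, terminated by the weighted Removal Lemma (Lemma~\ref{lemma:rem}). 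The estimate that makes this possible is that an $\cF$-free graph $G$ on $n$ vertices has $t(F,G):=\hom(F,G)/n^{|V(F)|}=O_F(1/n)$ for each $F\in\cF$: a homomorphism $F\to G$ is injective on edges of $F$ (no loops), so it either factors through a proper quotient of $F$ -- of which there are $O(n^{|V(F)|-1})$ many -- or is an embedding of $F$, and there are none. Fix $\eps$, let $\delta,M,n_0$ be as in Lemma~\ref{lemma:rem} with input $\cF$ and $\eps$, put $c:=\delta/(4M^2)$, and take $n$ large enough that $t(F,G)<\delta/2$ for all $F\in\cF$ with $|V(F)|\le M$ (finitely many conditions).

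Next I would run the following increment. Start from an equitable partition $\cV_0$ of $V(G)$ with exactly $n_0$ classes. Given $\cV_i$, let $W_i$ be its cluster graph -- the weighted graph on $|\cV_i|$ vertices recording the densities of $G$ inside and between the classes -- which, viewing $G$ and $W_i$ as $[0,1]$-valued kernels on a common probability space, is the conditional expectation of $G$ on the $\sigma$-algebra generated by $\cV_i$. If $t(F,W_i)<\delta$ for every $F\in\cF$ with $|V(F)|\le M$, stop. Otherwise pick such an $F$ with $t(F,W_i)\ge\delta$; since $t(F,G)<\delta/2$, the cut-norm counting lemma gives $\|G-W_i\|_\square\ge(t(F,W_i)-t(F,G))/e(F)\ge c$, so $\cV_i$ is not Frieze--Kannan $c$-regular, and refining it by the witnessing pair of vertex sets (and re-equitizing) produces an equitable $\cV_{i+1}$ with $|\cV_{i+1}|\le C|\cV_i|$ for an absolute constant $C$ and index (mean-square class density) $\|W_{i+1}\|_2^2\ge\|W_i\|_2^2+\Omega(c^2)$. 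As the index stays in $[0,1]$, the process halts within $O(1/c^2)$ steps regardless of how the forbidden densities fluctuate; the terminal partition $\cV$ is equitable, satisfies $n_0\le|\cV|\le C^{O(1/c^2)}n_0=n_0\,2^{\poly(M/\delta)}=f(\eps)$, and has $t(F,W_\cV)<\delta$ for every $F\in\cF$ with $|V(F)|\le M$.

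Now $W_\cV$ is a weighted graph on $|\cV|\ge n_0$ vertices with at most $(t(F,W_\cV)+o(1))|\cV|^{|V(F)|}<\delta\,|\cV|^{|V(F)|}$ weighted copies of each $F\in\cF$ with $|V(F)|\le M$, so Lemma~\ref{lemma:rem} yields a weighted graph $W^*$ on the same vertex set, lying in the weighted generalization of $\Forb(\cF)$, with $\dist(W_\cV,W^*)\le\eps$. Define $G'$ on $V(G)$ by replacing, inside each class $V_i$ and between each pair $V_i,V_j$, the edges of $G$ with an arbitrary (bipartite) graph of density $w^*_{ii}$, resp.\ $w^*_{ij}$. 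Then the cluster graph of $G'$ along $\cV$ is exactly $W^*$, so $\cV$ witnesses $G'\in\Forb(\cF)$, while $|E(G)\triangle E(G')|\le\sum_{i\le j}|w_{ij}-w^*_{ij}|\,|V_i||V_j|\le(n/|\cV|)^2\sum_{i\le j}|w_{ij}-w^*_{ij}|\le\tfrac{\eps}{2}n^2$, whence $\dist(G,G')\le\eps$. This is $f$-recoverability with $f(\eps)=n_0\,2^{\poly(M/\delta)}$.

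The conceptual heart -- and the step I would treat most carefully -- is the observation in the second paragraph that the index can be increased using an excess of forbidden homomorphisms in the cluster graph, not merely irregularity of $G$. Since a Frieze--Kannan refinement multiplies the class count by an absolute constant, whereas Szemer\'edi's lemma multiplies it by $2^{\Theta(k)}$, the final class count is only exponential in $1/c\sim M^2/\delta$, which is exactly why the bound tracks the Removal Lemma constants rather than a tower. The remaining delicate point is the transition ``$W_\cV$ close to the weighted generalization $\Rightarrow$ $G$ edit-close to a witnessed graph'': this is immediate when Lemma~\ref{lemma:rem} is stated in the normalized edit metric on weighted graphs over a fixed vertex set, as used above, but would need an extra rounding/smoothing step if only the cut metric were available; one must also check, against the precise definition of ``witnesses'', that a cluster graph in the weighted generalization -- vanishing internal weights included -- does force every graph with the corresponding blow-up pattern into $\Forb(\cF)$.
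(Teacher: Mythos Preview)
Your argument is correct and amounts to the same proof as the paper's, but with two layers of unnecessary work you can strip away.

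First, your index-increment loop is nothing but the proof of the Frieze--Kannan regularity lemma with parameter $c=\delta/(4M^2)$: whenever $t(F,W_i)\ge\delta$ while $t(F,G)<\delta/2$, the counting lemma forces $\|G-W_i\|_\square\ge c$, i.e.\ $\cV_i$ is not $c$-FK-regular, and you then perform the standard FK refinement step. So your iteration halts no later than the FK iteration would, and with the same bound on the number of parts. The paper simply quotes the FK lemma as a black box (Lemma~\ref{lemma:fk}) with $\gamma=\delta/(3M)^2$ and $k_0=n_0$, obtaining a $\gamma$-FK-regular equipartition $\cV$ with at most $n_0\cdot 2^{\poly(1/\gamma)}=n_0\cdot 2^{\poly(M/\delta)}$ parts, and then argues by contraposition: if $\dist(G/\cV,\Forbhom^*(\cF))\ge\eps$, Lemma~\ref{lemma:rem} yields $F\in\cF$ with $|V(F)|\le M$ and $t(F,G/\cV)\ge\delta$, whence the counting lemma (Lemma~\ref{lemma:counting}) gives $t(F,G)\ge\delta-2\gamma M^2>0$, contradicting $G\in\Forb(\cF)$ for $n$ large. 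Your explicit remark that $t(F,G)=O_F(1/n)$ rather than exactly $0$ is in fact more careful than the paper's one-line ``contradiction'' at this point.

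Second, the entire final step --- building $G'$ with $G'/\cV=W^*$ and bounding $\dist(G,G')$ --- is not needed. The paper's formal definition of an $\eps$-recovering partition (Definition~\ref{def:recoverable} and the definition preceding it) is precisely $\dist(G/\cV,\Forbhom^*(\cF))\le\eps$, which you have already established the moment you apply Lemma~\ref{lemma:rem} to $W_\cV$. You appear to be proving the informal description from the introduction (``$G$ is $\eps$-close to a graph with a witnessing partition''); that notion is equivalent up to $o(1)$, but the formal target lives entirely on the cluster-graph side and requires no passage back to $V(G)$.
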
       
The case of $\cF$ finite is an instance where the bounds given by
Lemma~\ref{lemma:rem} relate polynomially with the bounds of
Lemma~\ref{lemma:removal}. In particular, Theorem~\ref{th:recoverable}, 
together with the abovementioned bounds for Lemma~\ref{lemma:removal} obtained 
by Fox~\cite{Fox2011} for finite families $\cF$, implies that
$\Forb(\cF)$ is $f$-recoverable with $f(\eps) = \TOWER(\poly(\log(1/\eps)))$.

    The remainder of the paper is structured as follows. In Section~\ref{sec_notation} we introduce notation and describe some tools
    that we use in our arguments. In
    Section~\ref{sec_recoverability}, we introduce the concept of recoverable graph properties and prove Theorem~\ref{th:recoverable}.
    Theorem~\ref{thm:2} is a consequence of  Theorem~\ref{th:cardinality}, which is the main result in Section~\ref{sec_distance}. In 
    Section~\ref{sec:proofthm6} we prove Theorem~\ref{th:estimatequotfunc}, which is the technical tool for establishing Theorem~\ref{th:cardinality}. We finish the paper with some concluding remarks in Section~\ref{sec:concluding-remarks}.

    \section{Notation and tools}\label{sec_notation}

    A \emph{weighted graph} $R$ over a (finite) set of
    vertices~$V$ is a symmetric function from~$V\times V$ to~$[0,1]$. A weighted
    graph~$R$ may be viewed as a complete graph (with loops) in which a
    weight~$R(i,j)$ is given to each edge~$(i,j)\in V(R)\times V(R)$, where~$V(R)$
    denotes the vertex set of~$R$. The set of all weighted graphs with vertex
    set~$V$ is denoted by~$\cG^*(V)$ and we define~$\cG^*$ as the union of
    all~$\cG^*(V)$ for~$V$ finite. In particular, a \emph{graph} $G$ is a
    weighted graph such that~$G(i,i)=0$, for every~$i\in V(G)$, and
    either~$G(i,j)=1$ or~$G(i,j)=0$ for every $(i,j)\in V(G)\times V(G)$, $i \neq
    j$. For a weighted graph $R\in\cG^*(V)$ and for sets $A,B \subset V$, we denote
    $e_R(A,B)=\sum_{(i,j) \in A\times B} R(i,j)$ and $e(R) = e(V,V){/2}$.
    Given a graph $G =(V,E)$ and vertex sets $U,W \subseteq V(G)$, let $E_G(U,W)=\{(u,w)\in
     E \colon u\in U, w\in W\}$ and $e_G(U,W)=|E_G(U,W)|$.

        An \emph{equipartition}~$\cV = \{V_i\}_{i=1}^k$ of a weighted graph~$R$ is a
        partition of its vertex set~$V(R)$ such that~$|V_i|\leq |V_j|+1$ for
        all~$(i,j)\in [k] \times [k]$. We sometimes abuse terminology and say that $\cV$
        is a partition of $R$. 
        
        Let~$\cV = \{V_1,\dots,V_k\}$ be an equipartition into $k$ classes  of a graph~$G=(V,E)$.  The
    \emph{cluster graph} of~$G$ by~$\cV$ is a weighted graph~$\quot{G}{\cV} \in
    \cG^*([k])$
    such that $\quot{G}{\cV}(i,j) = e_G(V_i,V_j)/(|V_i||V_j|)$ for all
    $(i,j)\in[k]\times [k]$.
    For a fixed integer~$K>0$, the set of all equipartitions of a vertex
    set~$V$ into at most~$K$ classes will be denoted by~$\Pi_K(V)$. We
    also define the set
    $\quot{G}{\Pi_K} = \{\quot{G}{\cV}: \cV \in \Pi_K(V(G))\}$ of all
    cluster graphs of~$G$ whose vertex set has size at most~$K$. 

    The \emph{distance} between two weighted graphs~$R,R'\in \cG^{*}(V)$ on the same
    vertex set~$V$ is given by 
    \begin{equation*}  \dist(R,R') =
    \frac1{|V|^2}\sum_{(i,j)\in V\times V} |R(i,j)-R'(i,j)|.  \end{equation*}
    For a property $\cH\subseteq \cG^*$  of weighted graphs, i.e., for a subset 
    of the set of weighted graphs which is closed under isomorphisms, we define
    \begin{equation*}  \dist(R,\cH) = \min_{\substack{R'\in
    \cH:\\ V(R')=V(R)}} \dist(R,R').  \end{equation*} 
    Unless said otherwise, we will assume that $\cH$ contains weighted graphs with vertex sets of all possible
    sizes.

    Next, to  set up the version of regularity (or Regularity Lemma) that we use in this work, we use a
    second well-known distance between weighted graphs. Let
    $R_1, R_2\in \cG^*(V)$ be weighted graphs on the same vertex set. The
    \emph{cut-distance} between $R_1$ and $R_2$ is defined as 
    \begin{equation*}
      \displaystyle{\dcut(R_1,R_2)
        = \frac{1}{|V|^2}\max_{S,T\subseteq V}|e_{R_1}(S,T) - e_{R_2}(S,T)|}.
    \end{equation*}

        Let $\Gamma\in\cG(V)$ and $\cV=\{V_i\}_{i=1}^k$ be a partition of $V$. We
        define the weighted graph $\Gamma_{\cV}\in \cG^*(V)$ as the weighted graph
        such that $\Gamma_{\cV}(u,v) = \quot{\Gamma}{\cV}(i,j)$ if $u\in V_i$ and
        $v\in V_j$. Graph regularity lemmas ensure that, for any large graph
        $\Gamma$, there exists an equipartition  $\cV$ into a constant number of classes such
        that $\Gamma_{\cV}$ is a faithful approximation of $\Gamma$. Here, we use
        the regularity introduced by Frieze and Kannan~\cite{FK99}.
        Henceforth we write
    $b = a \pm x$ for $a-x \leq b \leq a+x$.  
        \begin{definition} A partition~$\cV = \{V_i\}_{i=1}^k$ of a graph~$\Gamma$
        is~\emph{$\gamma$-FK-regular} if $\dcut(\Gamma,\Gamma_{\cV}) \leq \gamma$,
        or, equivalently if  for all~$S,T\subseteq
        V(\Gamma)$ it holds that
        \begin{equation*}
        e(S,T) = \sum_{(i,j)\in [k] \times [k]} |S\cap V_i||T\cap
        V_j|\quot{\Gamma}{\cV}(i,j) \pm \gamma |V(\Gamma)|^2.
        \end{equation*}
         \end{definition}
    The concept of $FK$-regularity is also known as \emph{weak regularity}.

    \begin{lemma}[Frieze-Kannan Regularity Lemma]\label{lemma:fk} 
    For every~$\gamma
    > 0$ and every $k_0>0$, there is~$K=k_0 \cdot 2^{\poly(1/\gamma)}$ such that
    every graph $\Gamma$ on $n \geq K$ vertices admits a~$\gamma$-FK-regular
    equipartition into $k$ classes, where $k_0\leq k \leq K$.$\hfill\qed$
    \end{lemma}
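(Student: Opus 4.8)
The plan is to run a mean-square density (``energy'') increment argument; the reason the number of classes is only exponential in $\poly(1/\gamma)$, rather than tower-type, is that for \emph{weak} regularity each increment step is a single application of the Cauchy--Schwarz inequality and gains a quantity polynomial in $\gamma$. For an equipartition $\cV=\{V_1,\dots,V_k\}$ of a graph $\Gamma$ on $n$ vertices, write
\[
\operatorname{ind}(\cV)=\frac{1}{n^{2}}\sum_{(u,v)\in V\times V}\Gamma_{\cV}(u,v)^{2}
=\sum_{(i,j)\in[k]\times[k]}\frac{|V_i|\,|V_j|}{n^{2}}\,\quot{\Gamma}{\cV}(i,j)^{2}\ \in\ [0,1].
\]
Since $\Gamma_{\cV}$ is the average of the adjacency function of $\Gamma$ over the rectangles of $\cV$, whenever $\cV'$ refines $\cV$ the function $\Gamma_{\cV'}-\Gamma_{\cV}$ has average $0$ on each rectangle of $\cV$, hence is orthogonal to $\Gamma_{\cV}$ in $L^{2}(V\times V)$, and the Pythagorean identity gives
\[
\operatorname{ind}(\cV')=\operatorname{ind}(\cV)+\frac{1}{n^{2}}\sum_{(u,v)\in V\times V}\bigl(\Gamma_{\cV'}(u,v)-\Gamma_{\cV}(u,v)\bigr)^{2}\ \ge\ \operatorname{ind}(\cV).
\]
Thus $\operatorname{ind}$ is nondecreasing under refinement. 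I will also use a simple variant: if an equipartition $\cW$ is altered only within a set $M$ that is a union of classes of $\cW$ and remains a union of classes afterwards, then $\operatorname{ind}$ changes by at most $2|M|/n$, because the two partitions induce the same stepped graph outside $M$ and the at most $2|M|n$ entries through $M$ contribute at most $2|M|n$ to either sum.

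The core step I would prove is: \emph{if an equipartition $\cV$ into $k$ classes is not $\gamma$-FK-regular, then it has a refinement $\cV^{*}$ into at most $4k$ classes with $\operatorname{ind}(\cV^{*})\ge\operatorname{ind}(\cV)+\gamma^{2}$.} As $\dcut(\Gamma,\Gamma_{\cV})>\gamma$ there are $S,T\subseteq V(\Gamma)$ with $|e_{\Gamma}(S,T)-e_{\Gamma_{\cV}}(S,T)|>\gamma n^{2}$; let $\cV^{*}$ be the common refinement of $\cV$ with the four-class partition $\{S\cap T,\ S\setminus T,\ T\setminus S,\ V\setminus(S\cup T)\}$. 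Then $\cV^{*}$ has at most $4k$ classes and $S\times T$ is a union of rectangles of $\cV^{*}$, so averaging $\Gamma$ over those rectangles gives $e_{\Gamma_{\cV^{*}}}(S,T)=e_{\Gamma}(S,T)$. Hence $\bigl|\sum_{(u,v)\in V\times V}(\Gamma_{\cV^{*}}-\Gamma_{\cV})(u,v)\,\mathbf 1_{S}(u)\mathbf 1_{T}(v)\bigr|>\gamma n^{2}$, and Cauchy--Schwarz bounds the left side by $\bigl(\sum_{u,v}(\Gamma_{\cV^{*}}-\Gamma_{\cV})^{2}\bigr)^{1/2}\sqrt{|S|\,|T|}\le n\bigl(\sum_{u,v}(\Gamma_{\cV^{*}}-\Gamma_{\cV})^{2}\bigr)^{1/2}$. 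Therefore $\sum_{u,v}(\Gamma_{\cV^{*}}-\Gamma_{\cV})^{2}>\gamma^{2}n^{2}$, and the Pythagorean identity from the first paragraph yields $\operatorname{ind}(\cV^{*})>\operatorname{ind}(\cV)+\gamma^{2}$.

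The step I expect to require the most care is restoring equitability, since $\cV^{*}$ need not be equitable and may contain tiny classes, so that a naive refinement to an equipartition could need up to $n$ classes. Here I would use the standard chopping device. Fix $K'=\Theta(k/\gamma^{2})$, cut each class of $\cV^{*}$ into blocks of size $\lfloor n/K'\rfloor$ (with one leftover piece of size $<\lfloor n/K'\rfloor$ per class); the union $L$ of the leftover pieces has $|L|\le 4k\lfloor n/K'\rfloor\le\gamma^{2}n/8$ once $K'$ is a large enough multiple of $k/\gamma^{2}$ and $n\ge K'$. This block partition refines $\cV^{*}$, so its energy is at least $\operatorname{ind}(\cV^{*})$; now re-partition $L$ internally so that, together with the blocks, one obtains an equipartition $\cV'$ into $O(k/\gamma^{2})$ classes. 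By the variant in the first paragraph, $\operatorname{ind}(\cV')\ge\operatorname{ind}(\cV^{*})-2|L|/n\ge\operatorname{ind}(\cV^{*})-\gamma^{2}/4\ge\operatorname{ind}(\cV)+\tfrac34\gamma^{2}$. (The combinatorics of the internal re-partition of $L$ are routine but fiddly, as in every proof of a regularity lemma; the only point is that taking $K'$ proportional to $k/\gamma^{2}$ keeps $L$ negligible.)

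Finally I would iterate: starting from an arbitrary equipartition of $\Gamma$ into $k_{0}$ classes (available since $n\ge K$), while the current equipartition is not $\gamma$-FK-regular, replace it by the equipartition $\cV'$ produced above. Each round raises $\operatorname{ind}$ by more than $\tfrac34\gamma^{2}$, and $\operatorname{ind}\le1$, so the process stops after fewer than $2/\gamma^{2}$ rounds at a $\gamma$-FK-regular equipartition, and each round multiplies the number of classes by a factor $O(1/\gamma^{2})$. Hence the final number $k$ of classes satisfies
\[
k_{0}\ \le\ k\ \le\ k_{0}\cdot\bigl(c/\gamma^{2}\bigr)^{2/\gamma^{2}}\ =\ k_{0}\cdot 2^{\poly(1/\gamma)}\ =:\ K
\]
for an absolute constant $c$, and the hypothesis $n\ge K$ guarantees that every block count used along the way is at most $n$, so all the chops are legitimate. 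This produces the required $\gamma$-FK-regular equipartition into $k$ classes with $k_{0}\le k\le K$. The main obstacle, as indicated, is carrying the energy increment safely through the re-equitization step; the rest is the routine energy-increment scheme.
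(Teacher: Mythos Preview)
Your proposal is a correct and standard energy-increment proof of the Frieze--Kannan regularity lemma. Note, however, that the paper does not prove this lemma at all: it is stated with a trailing $\qed$ and attributed to Frieze and Kannan~\cite{FK99}, so there is no ``paper's own proof'' to compare against. Your write-up is essentially the argument one finds in the original source and in standard expositions; the only minor remark is that the tightest version of the argument defers all re-equitization to a single step at the very end (yielding $k_0\cdot 4^{O(1/\gamma^{2})}$ parts rather than $k_0\cdot(c/\gamma^{2})^{O(1/\gamma^{2})}$), but either way the bound is $k_0\cdot 2^{\poly(1/\gamma)}$, which is all the lemma claims.
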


    We remark that Conlon and Fox~\cite{conlon12:_bound} found graphs where the
    number of classes in any $\gamma$-FK-regular equipartition is at
    least~$2^{1/(2^{60}\gamma^2)}$ (for an earlier result, see
    Lov\'{a}sz and Szegedy~\cite{LS07}).

    \section{Recoverable parameters}\label{sec_recoverability}

    The main objective of this section is to introduce the concept of
    $\eps$-recoverability and to state our main results in terms of it. 

    \subsection{Estimation over cluster graphs} 
	For a weighted graph $R\in\cG^*(V)$ and a subset $Q \subseteq V$ of vertices, let $R[Q]$ denote the induced weighted subgraph of $R$ with vertex set $Q$. 
	Let us now define estimable parameters in the context of weighted graphs.
    
    \begin{definition} \label{def:g_param} 
        We say that a function~$z\colon\cG^*\to\RR$ (also called a 
        \emph{weighted graph parameter}) is \emph{estimable} with \emph{sample complexity}
        $q\colon (0,1)\to\NN$ if, for every~$\eps>0$ and every weighted
        graph~$\Gamma^* \in\cG^*(V)$ with~$|V|\geq q(\eps)$, we have $z(\Gamma^*) =
        	z(\Gamma^*[Q]) \pm \eps$ with probability at least~$2/3$, where $Q$ is chosen uniformly from all subsets of $V$ of size $q$.
    \end{definition}

    The following result states that graph parameters, that can be expressed as the
      optimal value of some optimization problem over the set $\quot{G}{\Pi_K}$ of all cluster graphs of $G$ of vertex size at most $K$,
      can be estimated with a query complexity that is only exponential 
     in a polynomial in
      $K$ and in the error parameter.

    \begin{theorem}\label{th:estimatequotfunc} Let~$z\colon \cG\to\RR$ be a graph
      parameter and suppose that there is a weighted graph
      parameter~$z^*\colon\cG^*\to\RR$ and constants~$K>0$ and~$c>0$ such
      that
      \begin{enumerate} \item 
        $z(\Gamma) = \max_{R\in\quot{\Gamma}{\Pi_K}} z^*(R)$ for every
        $\Gamma\in\cG$, and \item \label{itm:continuous}
        $|z^{*}(R)-z^{*}(R')| \leq c\cdot\dist(R,R')$ for all weighted
        graphs $R,R'\in\cG^*$ on the same vertex set.
      \end{enumerate}
      Then~$z$ is estimable with sample
      complexity~$\eps\mapsto \poly(K,c/\eps)$.
    \end{theorem}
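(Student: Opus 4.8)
The plan is to use the natural estimator: draw a set $Q$ of $q=\poly(K,c/\eps)$ vertices of $\Gamma$ uniformly at random (with $|V(\Gamma)|$ above a threshold depending on $\eps$) and output $z(\Gamma[Q])$; it suffices to show $z(\Gamma[Q])=z(\Gamma)\pm\eps$ with probability at least $2/3$. By hypothesis~(1) this splits into a lower and an upper estimate, and hypothesis~(2) converts $\dist$-error into $z^*$-error at rate $c$, so a $\dist$-budget of $\eps/(2c)$ buys a $z^*$-slack of $\eps/2$.

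For the lower bound $z(\Gamma[Q])\geq z(\Gamma)-\eps/2$, I would fix $\cV^\star=\{V_1,\dots,V_k\}\in\Pi_K(V(\Gamma))$ attaining $z^*(\quot{\Gamma}{\cV^\star})=z(\Gamma)$ and follow the restriction $\{V_i\cap Q\}$. As $\cV^\star$ is equitable, each $|V_i\cap Q|$ is hypergeometric with mean $\approx q/k$, so with high probability all parts have size $q/k\pm O(\sqrt{q\log K})$, and moving the $O(k\sqrt{q\log K})$ excess vertices turns $\{V_i\cap Q\}$ into an equipartition $\cW$ of $Q$. One such move alters $O(k)$ cluster-graph entries by $O(k/q)$ each, hence $\dist$ by $O(1/q)$, so rebalancing costs $O(k\sqrt{\log K/q})$ in $\dist$. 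Separately, for each of the at most $K^2$ pairs $(i,j)$ the density $e_\Gamma(V_i\cap Q,V_j\cap Q)/(|V_i\cap Q|\,|V_j\cap Q|)$ concentrates around $\quot{\Gamma}{\cV^\star}(i,j)$ to within $\eps/(8c)$ with failure probability $\exp(-\Omega((\eps/(cK))^2q))$, by a bounded-differences estimate over the choice of $Q$. For $q=\poly(K,c/\eps)$ a union bound yields $\dist(\quot{\Gamma[Q]}{\cW},\quot{\Gamma}{\cV^\star})\leq\eps/(2c)$, and then $(2)$ and $(1)$ give $z(\Gamma[Q])\geq z^*(\quot{\Gamma[Q]}{\cW})\geq z(\Gamma)-\eps/2$.

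For the upper bound $z(\Gamma[Q])\leq z(\Gamma)+\eps/2$ one must control $z^*(\quot{\Gamma[Q]}{\cW})$ for \emph{every} equipartition $\cW$ of $Q$ into at most $K$ parts, so instead of a single partition I would discretise the target. Fix a finite $\eta$-net $\mathcal{N}$ of all weighted graphs on at most $K$ vertices, with $\eta$ a suitable constant multiple of $\eps/c$; its size $N$ depends only on $K$ and $c/\eps$. If $R\in\mathcal{N}$ has $z^*(R)>z(\Gamma)+\eps/4$ then, by $(1)$ and $(2)$, $\dist(\quot{\Gamma}{\cV},R)>\eps/(4c)$ for every $\cV\in\Pi_K(V(\Gamma))$ with $|V(\cV)|=|V(R)|$. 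The decisive input is then a soundness statement for partition problems, in the style of Goldreich, Goldwasser and Ron~\cite{Goldreich:1998:PTC:285055.285060}: \emph{for a fixed weighted graph $R$ on at most $K$ vertices, if $\dist(\quot{\Gamma}{\cV},R)>\rho$ for every equipartition $\cV$ of $V(\Gamma)$ with $|V(\cV)|=|V(R)|$, then with probability at least $1-1/(3N)$ every equipartition $\cW$ of $Q$ with $|V(\cW)|=|V(R)|$ satisfies $\dist(\quot{\Gamma[Q]}{\cW},R)>\rho/2$, provided $q\geq\poly(K,1/\rho,\log N)$.} Applying this with $\rho=\eps/(4c)$ and a union bound over the at most $N$ net points $R$ with $z^*(R)>z(\Gamma)+\eps/4$, we obtain with high probability that $\dist(\quot{\Gamma[Q]}{\cW},R)>\eta$ for every $\cW$ and every such $R$; since $\mathcal{N}$ is an $\eta$-net, any $\cW$ with $z^*(\quot{\Gamma[Q]}{\cW})>z(\Gamma)+\eps/2$ would have its cluster graph $\eta$-close to such an $R$, a contradiction. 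Hence $z(\Gamma[Q])\leq z(\Gamma)+\eps/2$. Choosing $q=\poly(K,c/\eps)$ large enough that the lower- and upper-bound failure events together have probability at most $1/3$ finishes the proof.

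The hard part is the soundness statement used in the upper bound: that a random induced subgraph of a graph whose $\leq K$-part cluster graphs all stay $\rho$-far from a fixed $R$ keeps this property. I would prove it by an oracle/greedy argument of Goldreich--Goldwasser--Ron type for graph partition problems: from an equipartition $\cW$ of $Q$ with $\quot{\Gamma[Q]}{\cW}$ close to $R$, build an equipartition of $V(\Gamma)$ by scanning $V(\Gamma)\setminus Q$ and assigning each vertex to the part that best matches the prescribed density profile relative to $Q$, and then show that if the resulting cluster graph were far from $R$ this would force $Q$ to be an atypical sample, which a Chernoff/bounded-differences bound forbids with the required probability. It is exactly this step that avoids any Szemer\'edi- or Frieze--Kannan-type regularisation of $\Gamma$ --- such a regularisation would entail a number of parts, and hence a sample size, at least exponential in $1/\eps$ --- and it is what makes the final bound $\poly(K,c/\eps)$ rather than a tower.
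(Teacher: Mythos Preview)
Your proposal is correct and follows essentially the same overall strategy as the paper's proof, namely: reduce both directions to a statement comparing the set of cluster graphs of~$\Gamma$ with those of~$\Gamma[Q]$, and prove the ``soundness'' direction via an $\eps$-net of weighted graphs on at most~$K$ vertices together with a partition-testing result of GGR type.

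The differences are in packaging. The paper bundles both the lower and the upper bound into a single lemma (Lemma~\ref{lemma:sample}) asserting that, with probability at least~$2/3$, every cluster graph of~$\Gamma$ is $\eps$-close to some cluster graph of~$\Gamma[Q]$ and vice versa; both directions of that lemma are proved uniformly by citing the Fischer--Matsliah--Shapira partition tester (Theorem~\ref{thm:FMS}) as a black box and then invoking Goldreich--Trevisan to pass to a canonical tester. You instead treat the two directions asymmetrically: your lower bound is a direct and more elementary concentration-plus-rebalancing argument for the single optimal partition~$\cV^\star$, avoiding the testing machinery entirely; your upper bound reproduces the paper's net-and-soundness structure but proposes to \emph{prove} the soundness statement by a GGR-style greedy lifting rather than to quote it. What your approach buys is a self-contained argument with no external black boxes; what the paper's buys is brevity, since the FMS result delivers exactly the needed soundness with the right $\poly(K,1/\eps,\log(1/\delta))$ dependence already established.
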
 
    The proof of Theorem~\ref{th:estimatequotfunc} is rather technical and is therefore deferred to Section~\ref{sec:proofthm6}. Moreover, in Section~\ref{sec_distance} we show how to express the parameter $\z$ introduced in~\eqref{eq:thm2}, in terms of the solution of a suitable optimization
    problem over the set $\quot{\Gamma}{\Pi_K}$ of cluster graphs of $\Gamma$ of vertex size at most $K$.

    \subsection{Recovering partitions}

    We are interested in the property of graphs that are free of \emph{copies} of
    members of a (possibly infinite) family~$\cF$ of graphs.  
    To relate this property to a property of cluster graphs, we introduce 
    some definitions. 
    Let~$\phi\colon V(F)\to V(R)$ be a mapping from the set of
    vertices of a graph~$F\in\cG$ to the set of vertices of a weighted
    graph~$R\in\cG^*$. The \emph{homomorphism weight} $\hom_{\phi}(F,R)$ of~$\phi$
    is defined as 
    \begin{equation*} 
        \hom_{\phi}(F,R) = \prod_{(i,j)\in E(F)}R(\phi(i),\phi(j)).  
    \end{equation*} 
    The \emph{homomorphism
    density}~$t(F,R)$ of~$F\in \cG$ in~$R\in \cG^*$ is defined as the average
    homomorphism weight of a mapping in~$\Phi := \{ \phi\colon V(F) \to V(R) \}$,
    that is, 
    \begin{equation*}
        t(F,R) = \frac{1}{|\Phi|}\sum_{\phi\in\Phi} \hom_{\phi}(F,R).  
    \end{equation*} 
    Note that, if $F$ and $R$ are graphs, then~$t(F,R)$ is approximately the
     density of copies of~$F$ in~$R$ (and converges to this quantity when
    the vertex size of~$R$ tends to infinity).
    Since weighted graphs will represent
    cluster graphs associated with a partition of the vertex set of the input graph,
    it will be convenient to work with the following property of weighted graphs:
    \begin{equation*} 
        \Forbhom^*(\cF) = \{R\in\cG^*: t(F,R)=0 \text{ for every $F\in\cF$}\}. 
    \end{equation*}

    Let $R,S\in\cG^*(V)$ be weighted graphs on the same vertex set~$V$.  We say
    that~$S$ is a \emph{spanning subgraph} of~$R$, which will be denoted
    by~$S\leq R$, if~$S(i,j)\leq R(i,j)$ for every~$(i,j)\in V\times V$. 
    When there is no ambiguity, we will just say that $S$ is a
    \emph{subgraph} of $R$.
    We also define
    $\Forbhom^*(R,\cF) = \{S\in \Forb_{\hom}^*(\cF): S\leq R\}$.  

The following result shows that having a cluster graph in~$\Forbhom^*(\cF)$
witnesses membership in~$\Forb(\cF)$. 
\begin{proposition}\label{prop:quothom} 
    Let $\cF$ be a family of graphs and let~$\cV$ be an equipartition of a
    graph~$G$. If $\quot{G}{\cV}\in \Forbhom^*(\cF)$,  then~$G\in \Forb(\cF)$. 
\end{proposition}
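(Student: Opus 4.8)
The plan is to argue by contraposition: assuming $G \notin \Forb(\cF)$, we produce some $F \in \cF$ with $t(F, \quot{G}{\cV}) > 0$, which shows $\quot{G}{\cV} \notin \Forbhom^*(\cF)$. So suppose $G$ contains a copy of some $F \in \cF$; fix an injective homomorphism $\psi \colon V(F) \to V(G)$ whose image spans that copy. Let $\cV = \{V_1, \dots, V_k\}$ be the given equipartition and let $c \colon V(G) \to [k]$ be the map sending each vertex to the index of its class. The key observation is that the composition $\phi = c \circ \psi \colon V(F) \to [k] = V(\quot{G}{\cV})$ is a mapping whose homomorphism weight $\hom_\phi(F, \quot{G}{\cV})$ is strictly positive: for each edge $(i,j) \in E(F)$, the pair $(\psi(i), \psi(j))$ is an edge of $G$, hence it contributes to $e_G(V_{\phi(i)}, V_{\phi(j)})$, so that $\quot{G}{\cV}(\phi(i), \phi(j)) = e_G(V_{\phi(i)}, V_{\phi(j)})/(|V_{\phi(i)}||V_{\phi(j)}|) > 0$. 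Since $\hom_\phi(F, \quot{G}{\cV})$ is a product of finitely many strictly positive numbers, it is strictly positive.

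It then remains to pass from a single mapping with positive weight to positive homomorphism density. By definition, $t(F, \quot{G}{\cV})$ is the average of $\hom_{\phi'}(F, \quot{G}{\cV})$ over all mappings $\phi' \colon V(F) \to [k]$; all these weights are nonnegative (the weighted graph takes values in $[0,1]$), and at least one of them — namely the $\phi$ constructed above — is strictly positive. Hence the average is strictly positive, i.e. $t(F, \quot{G}{\cV}) > 0$, so $\quot{G}{\cV} \notin \Forbhom^*(\cF)$. Taking the contrapositive gives the claim.

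There is essentially no hard step here; the only point that requires a moment's care is the very first observation, namely that a positive edge weight in the cluster graph is \emph{exactly} recording that the corresponding pair of classes has at least one $G$-edge between them, which is what lets a genuine copy of $F$ in $G$ project down to a positive-weight mapping into $\quot{G}{\cV}$. One should also note that we never need $\phi$ to be injective — a homomorphism weight, and hence $t(F,R)$, is insensitive to this, which is precisely why projecting a copy of $F$ through the (generally non-injective) class map $c$ still does the job. Everything else is unwinding definitions.
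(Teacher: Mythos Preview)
Your proof is correct and is essentially the contrapositive formulation of the paper's own argument: the paper fixes an arbitrary injective $\varphi\colon V(F)\hookrightarrow V(G)$, composes with the class map to get a map into $V(R)$, and observes that $t(F,R)=0$ forces some factor $R(\psi(u),\psi(w))=0$, whence $G(\varphi(u),\varphi(w))=0$. The underlying idea---an edge of $G$ between classes $V_i$ and $V_j$ is precisely what makes $\quot{G}{\cV}(i,j)>0$---is identical in both proofs.
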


\begin{proof} 
    Let $\cV = \{V_i\}_{i=1}^k$ be an equipartition of $G$ and let $R=
    \quot{G}{\cV}$.  Fix an arbitrary element~$F\in\cF$ and an arbitrary
    injective mapping $\varphi\colon V(F)\hookrightarrow V(G)$.  Define the
    function $\psi\colon V(F) \to V(R)$ by $\psi(v)=i$ if $\varphi(v) \in
    V_i$.  Now, if~$t(F,R) = 0$, there must be some edge~$(u,w) \in E(F)$
    such that~$R(\psi(u),\psi(w))=0$, thus  $G(\phi(u),\phi(v))=0$ and
    hence $\hom_{\phi}(F,G) = 0$.  Since~$\phi$ and~$F$ were taken arbitrarily,
    we must have~$G\in\Forb(\cF)$.  
\end{proof}

It is easy to see that the converse of Proposition~\ref{prop:quothom}  does not
hold in general. Indeed, there exist graph families~$\cF$ and graphs $G\in
\Forb(\cF)$ such that~$\quot{G}{\cV}$ is actually very \emph{far} from being
in~$\Forbhom^*(\cF)$ for some equipartition $\cV$ of $G$. As an example,
let $G$ be the $n$-vertex bipartite Tur\'{a}n graph $T_2(n)$ for 
the triangle $K_3$ with partition $V(G)=A \cup B$
and consider $\cV=\{ V_i\}_{i=1}^t $ with $V_i=A_i \cup B_i$, $i=1, \ldots , t$,
where $\{ A_i\}_{i=1}^t$ and $\{ B_i\}_{i=1}^t$ are equipartitions of $A$ and
$B$ respectively. Then $\quot{G}{\cV}$ has  
weight $1/2$ on every edge, so that the distance of
$\quot{G}{\cV}$ to the family $\Forbhom^*(\{K_3\})$ tends to  $1/4$ for $t$ large by
Tur\'{a}n's Theorem. More generally, if $\cV$ is a random equipartition of
a triangle-free graph $G\in\Forb(\{K_3\})$ with large edge density, then with
high probability the cluster graph $\quot{G}{\cV}$ is still approximately $1/4$-far from being in
$\Forbhom^*(\{K_3\})$.

On the other hand, we will prove that there exist partitions for
  graphs in $\Forb(\cF)$ with respect to which an approximate version
  of the converse of Proposition~\ref{prop:quothom} does hold, that
  is, we will prove that every graph in $\Forb(\cF)$ is not too far
  from having a partition into a constant number of classes that witnesses membership in
  $\Forb(\cF)$. We say that such a partition is \emph{recovering} with
  respect to $\Forb(\cF)$.
  {Let us make this more precise.}

\begin{definition} Let $\cP=\Forb(\cF)$ be a monotone graph property. An equipartition~$\cV$ of a 
graph~$G\in\cP$ is
\emph{$\eps$-recovering} for~$\cP$ if
$\dist(\quot{G}{\cV}, \Forbhom^*(\cF)) \leq \eps$.
\end{definition}

\begin{definition}\label{def:recoverable}
  Let $\cP$ be a graph property. For a fixed function~$f\colon (0,1]\to\RR$, we say that the class~$\cP$ is
    \emph{$f$-recoverable} if, for every~$\eps > 0$, there exists
    $n_0=n_0(\eps)$ such that the following holds.
    For every graph~$G\in\cP$ on $n \geq n_0$ vertices, there is an equipartition~$\cV$ of~$G$ into at most~$f(\eps)$ classes which is
    $\eps$-recovering for~$\cP$.
\end{definition}

As a simple example, one can verify that the graph property $\cP
=  \Forb (\cF)$
  of being $r$-colorable is $f$-recoverable for
  $f(\eps) = r/\epsilon$; here and in what follows, for simplicity, we
  ignore divisibility conditions and drop floor and ceiling signs.
  Let~$G$ be a graph in~$\cP$, with color classes $C_1,\dots,C_r$.
  Let~$k=r/\epsilon$.  Start by fixing parts $V_1,\dots,V_t$ of size
  $n/k$ each, with each~$V_i$ contained in some~$C_j$ ($j=j(i)$), and
  leaving out fewer than~$n/k$ vertices from each~$C_j$,
  $1\leq j\leq r$.  The sets~$V_i$, $1\leq i\leq t$, cover a
  subset~$C_j'$ of~$C_j$ and~$X_j=C_j\setminus C_j'$ is left over.  We
  then complete the partition by taking arbitrary parts
  $U_1,\dots,U_{k-t}$ of size $n/k$ each, forming a partition
  of~$\bigcup_{1\leq j\leq r} X_j$.  The cluster graph $\quot{G}{\cV}$
  can be made $r$-partite by giving weight zero to every edge incident
  to vertices corresponding to $U_1,\dots,U_{k-t}$. Therefore
  $\quot{G}{\cV}$ is at distance at most $r/k\leq \eps$ from
  being $r$-partite. Thus, $\dist( \quot{G}{\cV}, \Forb_{hom}^* (\cF)) \leq \eps$ as required.
  
We finish this section by noting that the definition
  of~$f$-recoverable properties has some similarity with the notion of
  \emph{regular-reducible} properties $\cP$ defined by Alon, Fischer,
  Newman and Shapira~\cite{AFNS2009}. When dealing with monotone properties $\cP=\Forb(\cF)$, the main difference is that the
  notion of being regular-reducible requires that every
  graph~$G\in\cP$ should have a \emph{regular} partition such
  that~$\quot{G}{\cV}$ is close to some property~$\cH^*$  of weighted
  graphs, while the definition of $f$-recoverable properties requires only that 
  every $G$ has a partition $\cV$ (regular or not) such that $\quot{G}{\cV}$ is close
  to $\Forbhom^*(\cF)$. Another difference is
  that~$\cH^*$ must be such that having a (regular) cluster graph
  in~$\cH^*$ witnesses only \emph{closeness} to $\cP$, while having a 
  (regular or not) cluster graph in $\Forbhom^*(\cF)$ witnesses membership in $\cP$.

\subsection{Monotone graph properties are recoverable}\label{sec_forbrecoverable}
Szemer\'{e}di's Regularity Lemma~\cite{Sze76} can be used to show that
every monotone (and actually every hereditary) graph property is~$f$-recoverable,
for~$f(\eps) = \TOWER (\poly(1/\eps))$. In the remainder of this section, we prove
that monotone properties~$\cP=\Forb(\cF)$ are recoverable using a weaker version 
of regularity along with the Removal Lemma, which leads to an improvement on the
growth of~$f$ for families~$\cF$ where the Removal Lemma is known to hold with
better bounds than the Regularity Lemma.

We first derive a version of the Removal Lemma stated in the introduction (Lemma~\ref{lemma:removal}) that applies to weighted graphs and homomorphic copies.
\begin{lemma}\label{lemma:rem}
    For every~$\eps > 0$ and every (possibly infinite) family~$\cF$ of 
    graphs, there exist~$\delta=\delta(\eps,\cF)$, $M = M(\eps,\cF)$ and 
    $n_0 = n_0(\eps,\cF)$ such that the following holds.
    If a weighted graph~$R$ such that $|V(R)|>n_0$ satisfies $\dist(R,\Forbhom^*(\cF)) 
    \geq \eps$, then there is a graph~$F\in\cF$ with $|V(F)|\leq M$ such that~$t(F,R)\geq \delta$.
\end{lemma}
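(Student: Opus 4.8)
The plan is to deduce the lemma from the unweighted Removal Lemma (Lemma~\ref{lemma:removal}) via a random blow-up. Given a weighted graph $R$ on vertex set $[k]$ with $\dist(R,\Forbhom^*(\cF))\geq\eps$, let $m$ be large and form the random graph $G=G(R,m)$ on the disjoint union $W_1\cup\dots\cup W_k$ with $|W_i|=m$, inserting each possible edge between $W_i$ and $W_j$ independently with probability $R(i,j)$ (and each possible edge inside $W_i$ with probability $R(i,i)$); put $n=km$. Two facts will be used, each holding with high probability (the second simultaneously for all $F$ with $v(F)\leq M$, of which there are finitely many up to isomorphism). First, $G$ is quasirandom with respect to $\cV=\{W_i\}_i$: for all $i,j$ and all $A\subseteq W_i$, $B\subseteq W_j$ one has $e_G(A,B)=R(i,j)|A||B|\pm\gamma_0 m^2$ with $\gamma_0=\gamma_0(m)\to0$, by Chernoff and a union bound over subsets. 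Second, the number of copies of a fixed graph $F$ in $G$ is at most $t(F,R)\,n^{v(F)}+o(n^{v(F)})$: each copy is an injective lift of some map $\phi\colon V(F)\to[k]$ and $\phi$ has at most $m^{v(F)}$ lifts, so the expectation is at most $m^{v(F)}\sum_{\phi}\prod_{(u,v)\in E(F)}R(\phi u,\phi v)=n^{v(F)}t(F,R)$, and flipping a single edge of $G$ changes the count by $O(v(F)^2 n^{v(F)-2})$, so a bounded-differences inequality gives concentration.

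The heart of the proof is that the blow-up reflects distance to the property: with high probability $\dist(G,\Forb(\cF))\geq\eps'$ for a suitable $\eps'=\eps'(\eps,\cF)>0$. Suppose not. Since $\Forb(\cF)$ is monotone, there is an $\cF$-free spanning subgraph $G'\subseteq G$ with $|E(G)\setminus E(G')|<\tfrac{\eps'}{2}n^2$. Writing $\rho_{ij}$ for the fraction of possible edges between $W_i$ and $W_j$ deleted in passing from $G$ to $G'$, we have $\tfrac1{k^2}\sum_{i,j}\rho_{ij}=\dist(G,G')<\eps'$. Fix $\tau=\eps/2$ and a small $\gamma_1=\gamma_1(\eps,\cF)>0$, and define $S\in\cG^*([k])$ by $S(i,j)=R(i,j)$ if $R(i,j)\geq\tau$ and $\rho_{ij}\leq\gamma_1$, and $S(i,j)=0$ otherwise. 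On the one hand, since $S$ only zeroes out entries of $R$,
\[
\dist(R,S)=\tfrac1{k^2}\!\!\sum_{(i,j)\colon S(i,j)=0}\!\!R(i,j)\ \leq\ \tau+\tfrac1{\gamma_1}\cdot\tfrac1{k^2}\sum_{i,j}\rho_{ij}\ <\ \tau+\tfrac{\eps'}{\gamma_1},
\]
which is below $\eps$ once $\eps'<\tfrac12\eps\gamma_1$. On the other hand, $S\in\Forbhom^*(\cF)$: if some $F\in\cF$ admitted a map $\phi\colon V(F)\to V(S)$ with $S(\phi u,\phi v)>0$ for every $(u,v)\in E(F)$, then each pair $(W_{\phi u},W_{\phi v})$ has $R(\phi u,\phi v)\geq\tau$ and lost only a $\gamma_1$-fraction of its edges, so (since $\gamma_0,\gamma_1$ are small in terms of $\tau$) $G'$ has density at least $\tau/2$ between any two subsets of $W_{\phi u}$ and $W_{\phi v}$ of size at least $2\sqrt{\gamma_1/\tau}\,m$; the standard embedding (Key) Lemma for such pairs then produces a copy of $F$ in $G'$, contradicting $G'\in\Forb(\cF)$. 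Hence $S$ witnesses $\dist(R,\Forbhom^*(\cF))<\eps$, a contradiction, and the claim follows.

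To finish, fix a realisation of $G$ for which both high-probability facts of the first paragraph and the claim $\dist(G,\Forb(\cF))\geq\eps'$ hold. Applying Lemma~\ref{lemma:removal} to $G$ with error $\eps'$ yields some $F\in\cF$ with $v(F)\leq M:=M(\eps',\cF)$ such that $G$ has at least $\delta(\eps',\cF)\,n^{v(F)}$ copies of $F$; comparing with the upper bound on copy counts gives $t(F,R)\geq\delta(\eps',\cF)-o(1)\geq\tfrac12\delta(\eps',\cF)$. Thus the lemma holds with $M=M(\eps',\cF)$, $\delta=\tfrac12\delta(\eps',\cF)$, and $n_0$ chosen large enough that $m$ can be taken large enough for the concentration estimates and that $n=km\geq n_0(\eps',\cF)$.

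The main obstacle is the embedding step, and specifically its interaction with the sizes of the graphs in $\cF$: the Key Lemma requires the irregularity parameter—here of order $\sqrt{\gamma_1/\tau}$—to be small compared with $\tau^{\,v(F)}$, so a single choice of $\gamma_1$ only controls forbidden graphs up to some bounded size. When $\cF$ is finite this is harmless: one takes $\gamma_1$ small in terms of $\eps$ and $\max_{F\in\cF}v(F)$, which makes $\eps'$ polynomial in $\eps$ (with $\cF$-dependent exponent), so that the resulting $M$, $\delta^{-1}$, $n_0$ depend polynomially on the corresponding Removal-Lemma quantities, as claimed. For general $\cF$ one cannot handle all offending $F$ at once, and one must argue more carefully before embedding, replacing the offending $F$ by a bounded-size forbidden graph—which is available since, by Lemma~\ref{lemma:removal} itself, distance to $\Forb(\cF)$ is already detected by forbidden graphs of size at most $M(\eps'',\cF)$ for an appropriate $\eps''$—and embedding that. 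Finally, the diagonal entries $R(i,i)$ play no essential role: they contribute only $O(1/k)$ to every distance above and can be absorbed into the error terms throughout.
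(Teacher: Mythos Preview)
Your approach via a random blow-up is genuinely different from the paper's and, for \emph{finite}~$\cF$, it is essentially correct: once $\gamma_1$ is chosen small in terms of $\tau$ and $\max_{F\in\cF}|V(F)|$, the Key Lemma embedding goes through, the claim $\dist(G,\Forb(\cF))\ge\eps'$ follows, and the Removal Lemma finishes the argument. The paper instead thresholds $R$ directly to an unweighted graph $H$ on the same vertex set (putting an edge wherever $R(i,j)\ge\eps/2$), shows $\dist(H,\Forb(\hat\cF))\ge\eps/2$ for the family $\hat\cF$ of \emph{homomorphic images} of members of~$\cF$, applies Lemma~\ref{lemma:removal} to $H$ and~$\hat\cF$ to obtain a bounded $\hat F\in\hat\cF$ with $t(\hat F,H)\ge\hat\delta$, and then uses an Erd\H os--Simonovits supersaturation step (Proposition~\ref{prop:blowup}) to pass from $\hat F$ back to a bounded $F\in\cF$ with $t(F,R)\ge\delta$. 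This avoids any embedding into an auxiliary graph and treats finite and infinite $\cF$ uniformly.

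For infinite $\cF$ your argument has a real gap, and your proposed fix does not close it. You need to show $S\in\Forbhom^*(\cF)$; assuming this fails gives some $F\in\cF$ with a homomorphism into the support $H_S$ of $S$, and you want to contradict $G'\in\Forb(\cF)$ by embedding \emph{some} member of $\cF$ into $G'$. Your suggestion is to replace $F$ by a bounded-size $F'\in\cF$ supplied by Lemma~\ref{lemma:removal}. But the Removal Lemma produces such an $F'$ only inside a graph that is \emph{far} from $\Forb(\cF)$, and there is no such graph available at this point: $G'$ is in $\Forb(\cF)$ by assumption, the claim $\dist(G,\Forb(\cF))\ge\eps'$ is exactly what you are trying to prove, and a full blow-up of $H_S$ need not be far from $\Forb(\cF)$ either. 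Nor can you bound the relevant $F$ via the image $\hat F\subseteq H_S$, since $|V(\hat F)|$ is only bounded by $k=|V(R)|$, which would make $\gamma_1$, hence $\eps'$, and hence the final $M,\delta$ depend on $R$. What actually makes the infinite case work is precisely the paper's move to the family $\hat\cF$ of homomorphic images: the equivalence $S\in\Forbhom^*(\cF)\Leftrightarrow H_S\in\Forb(\hat\cF)$ lets one apply the Removal Lemma to $\hat\cF$ and then recover a bounded $F\in\cF$ via Proposition~\ref{prop:blowup}. If you graft that idea onto your blow-up framework the proof can be completed, but at that point it is essentially the paper's argument with an unnecessary probabilistic detour.
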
 

To prove Lemma~\ref{lemma:rem}, we use the following  auxiliary result, which follows from work of Erd\H{o}s and 
Simonovits~\cite{ES1983}.  For completeness we include its proof.

\begin{proposition}\label{prop:blowup}
    Let $\hat{F}$ and $F$ be graphs in $\cG$ such that there is 
    a surjective homomorphism $\zeta \colon V(F)\to V(\hat{F})$. Then, for every 
    graph $H$ such that $t(\hat{F},H)\geq \hat{\delta}$, we must have 
    $t(F,H)\geq \hat{\delta}^{\ell}$, where $\ell = (|V(F)|+1)^{|V(\hat{F})|}$.
\end{proposition}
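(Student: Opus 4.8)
The plan is to realise $F$ as a spanning subgraph of the complete blow-up of $\hat{F}$ and then bound the homomorphism density of that blow-up from below by ``splitting off'' one blown-up part at a time and applying Jensen's inequality.

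For each $v\in V(\hat{F})$ put $d_v=|\zeta^{-1}(v)|$; these are positive since $\zeta$ is surjective, and $\sum_v d_v=|V(F)|$. Let $B$ be the \emph{blow-up} of $\hat{F}$ obtained by replacing each vertex $v$ with an independent set $\zeta^{-1}(v)$ of size $d_v$ and inserting all edges between $\zeta^{-1}(v)$ and $\zeta^{-1}(v')$ whenever $(v,v')\in E(\hat{F})$. As $\zeta$ is a homomorphism of simple graphs, every edge of $F$ joins two \emph{distinct} parts $\zeta^{-1}(v),\zeta^{-1}(v')$ with $(v,v')\in E(\hat{F})$; hence $E(F)\subseteq E(B)$, so, identifying $V(F)=V(B)$, the graph $F$ is a spanning subgraph of $B$. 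Every homomorphism from $B$ into $H$ is then also a homomorphism from $F$ into $H$, whence $t(F,H)\geq t(B,H)$, and it suffices to prove $t(B,H)\geq\hat{\delta}^{\,\ell}$.

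The core of the argument is the following ``part-doubling'' inequality, which I would establish by a short conditioning computation: if $B'$ is obtained from some graph by replacing a (loopless) vertex $w$ by an independent set of size $d\geq 1$ all of whose vertices inherit $N(w)$, and $B''$ is the same graph with that part contracted back to a single vertex, then $t(B',H)\geq t(B'',H)^{\,d}$. Indeed, writing $t(B',H)$ as the average of $\prod_{(i,j)\in E(B')}H(\phi(i),\phi(j))$ over a uniformly random $\phi\colon V(B')\to V(H)$ and conditioning on the images $\mathbf{y}$ of the vertices outside the part, the $d$ images of the part become i.i.d.\ and each is constrained only by the same function $g(x;\mathbf{y})$, namely the joint-adjacency of a part-vertex mapped to $x$ to the images of its neighbours. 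Thus $t(B',H)=\mathbb{E}_{\mathbf{y}}[P(\mathbf{y})\,h(\mathbf{y})^{d}]$ and $t(B'',H)=\mathbb{E}_{\mathbf{y}}[P(\mathbf{y})\,h(\mathbf{y})]$, where $h(\mathbf{y})=\mathbb{E}_x\,g(x;\mathbf{y})\in[0,1]$, $P(\mathbf{y})\in\{0,1\}$ collects the edge indicators not touching the part, and $c:=\mathbb{E}[P]\in[0,1]$. If $c>0$, normalising $P$ to the probability density $P/c$ and applying Jensen's inequality to the convex map $t\mapsto t^{d}$ gives $\mathbb{E}[P h^{d}]\geq c^{\,1-d}(\mathbb{E}[Ph])^{d}\geq(\mathbb{E}[Ph])^{d}$, since $c\leq 1$ and $d\geq 1$; if $c=0$ both sides vanish.

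Applying this inequality once for each vertex of $\hat{F}$, blowing the parts up one at a time starting from $\hat{F}$ itself, yields $t(B,H)\geq t(\hat{F},H)^{\prod_v d_v}$. Since each $d_v\leq|V(F)|$, we have $\prod_v d_v\leq|V(F)|^{|V(\hat{F})|}\leq(|V(F)|+1)^{|V(\hat{F})|}=\ell$, and because $0\leq t(\hat{F},H)\leq 1$, raising it to the larger exponent $\ell$ only decreases it; combined with $t(\hat{F},H)\geq\hat{\delta}$ this gives $t(F,H)\geq t(B,H)\geq t(\hat{F},H)^{\ell}\geq\hat{\delta}^{\,\ell}$, as required. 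The one genuinely delicate point is the part-doubling step: one must check that the ``rest of the graph'' factor $P(\mathbf{y})$ is literally unchanged when the part is contracted, and that the normalising constant $c\leq 1$ pushes the inequality in the favourable direction rather than against it. Everything else is bookkeeping and the identification of $F$ with a spanning subgraph of $B$.
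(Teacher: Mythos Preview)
Your proof is correct and takes essentially the same approach as the paper: blow up the vertices of $\hat{F}$ one at a time and apply Jensen's inequality (convexity of $t\mapsto t^{d}$) at each step, then iterate. Your conditioning formulation yields the slightly sharper per-step exponent $d_v$ where the paper obtains $d_v+1$ (by bounding the number of partial homomorphisms separately from above and below), and your explicit reduction to the full blow-up $B$ via $t(F,H)\geq t(B,H)$ is cleaner than the paper's one-line appeal to induction on $|V(\hat{F})|$, but the underlying ideas coincide.
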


\begin{proof} We will consider the particular case in which $F$ is obtained from 
   blowing up a single vertex $v$ of $\hat{F}$ into $r$ distinct 
   vertices $v_1,\dots,v_r$ with the same adjacency as $v$, hence
   we assume that $\zeta(v_j)=v$ for every $j=1,\dots,r$ and $\zeta(u)=u$ 
   for every $u\notin \{v_1,\dots,v_r\}$. 

   Let $n=|V(H)|$, $\hat{a} = |V(\hat{F})|$, $a = |V(F)| = \hat{a}+r-1$ and $\hat{F}_{-} =
   \hat{F}-v$ be the graph on $\hat{a}-1$ vertices obtained from $\hat{F}$ by
   deleting $v$. Let $N = t(\hat{F}_-, H)n^{\hat{a}-1}$ be the number of
   homomorphisms from $\hat{F}_-$ to $H$ and $\phi_1,\dots,\phi_N\in
   V(H)^{V(\hat{F}_-)}$ be an enumeration of such homomorphisms. Note that $N\leq
   n^{\hat{a}-1}$ and $N\geq t(\hat{F},H)/n \geq \hat{\delta} n^{\hat{a}-1}$.

   For every $i\in[N]$ and $u\in V(H)$, we consider the function 
   $\phi_i^u$ that extends 
   $\phi_i$ by mapping $v$ to $u$. Define $Z_i=\{u\in
   V(H)\colon \hom_{\phi_i^u}(\hat{F},H)=1\}$ and $z_i = |Z_i|$. We claim 
   there are $z_i^r$ ways of extending $\phi_i$ to a homomorphism 
   from $F$ to $H$. Indeed, every possible extension $\phi'_i\colon V(F)\to V(H)$ 
   of $\phi_i$, such that $\phi'_i(v_j)\in Z_i$, for every $j=1,\dots,r$,
   satisfies $\hom_{\phi'_i}(F,H)=1$. Therefore we have
	   $t(F,H)n^a \geq \sum_{i=1}^N z_i^r$.
   Since $ g(x) =x^r$  is a convex function
   for $x \geq 0$ and $r \geq 1$, we get
   \begin{equation*}
    t(F,H)n^a \geq N\left(\frac{\sum_{i=1}^N z_i}{N}\right)^r.
   \end{equation*}
   Now we use the fact that $\sum_{i=1}^N z_i =
   t(\hat{F},H)n^{\hat{a}}\geq\hat{\delta} n^{\hat{a}}$ and 
   our previous bounds on $N$ to obtain that
   \begin{equation*}
   t(F,H)n^{a}\geq \hat{\delta} n^{\hat{a}-1}\left(\frac{\hat{\delta}
   n^{\hat{a}}}{n^{\hat{a}-1}}\right)^r = \hat{\delta}^{r+1} n^{\hat{a}+r-1} =
   \hat{\delta}^{r+1}
   n^a.
   \end{equation*}
   Therefore, $t(F,H)\geq \hat{\delta}^{r+1}\geq \hat{\delta}^{a+1}$. The general case 
   may be easily obtained by induction on the number of vertices of~$\hat{F}$.
\end{proof}

\begin{proof}[Proof of Lemma~\ref{lemma:rem}]
    Denote by~$\hat{\cF}$ the set of all \emph{homomorphic images} of 
    members of~$\cF$, that is, the set of all graphs~$\hat{F}\in\cG$ such that 
    there is a surjective homomorphism~$F\homto\hat{F}$, for some~$F\in\cF$.
    Let $\hat{M},\hat{\delta}$ and $\hat{n}_0$ be as in Lemma~\ref{lemma:removal} 
    with input $\hat{\cF}$ and $\eps/2$.  We take 
    \begin{equation*}
    M = \max_{\substack{\hat{F}\in\hat{\cF}:\\ |V(\hat{F})|\leq \hat{M}}}
         \min_{\substack{F\in\cF:\\ F\homto \hat{F}}} |V(F)|,
         \end{equation*}
    $n_0 = \hat{n}_0$, $\delta =(\eps/2)^{M^2} \hat{\delta}^{\ell}$, where
    $\ell = (M+1)^{\hat{M}}$.

    Let $R$ be a weighted graph such that $|V(R)| > n_0$ and $\dist(R,\Forbhom^*(\cF)) 
    \geq \eps$. We first define a graph~$H\in\cG(V(R))$ such that $H(i,j) = 1$ if and only
    if~$R(i,j)\geq \eps/2$. It follows from $\dist(R,\Forbhom^*(\cF)) \geq \eps$ that $\dist(H,\Forb(\hat{\cF}))
    \geq \eps/2$. Indeed, suppose to the contrary that there exists 
  $H' \in \Forb(\hat{\cF})$  such that 
  $\dist(H, H') < \eps/2$. Define $R'$ such that $R'(i,j)=R(i,j)$ if $H'(i,j)=1$ and $R'(i,j)=0$ otherwise. By construction, $R'\in \Forbhom^*(\cF)$, and we get a contradiction from
    \begin{align*}
    \dist(R,R')&= \frac{1}{|V(R)|^2} \sum_{\substack{i\in V(R),\\j\in V(R)}} \left|R(i,j)-R'(i,j)\right|\\
    &=  \frac{1}{|V(R)|^2} \left(\sum_{\substack{(i,j):\\H(i,j)=1,\\H'(i,j)=0}}
    \left|R(i,j)-R'(i,j)\right| + \sum_{\substack{(i,j):\\H(i,j)=0,\\H'(i,j)=0}} \left|R(i,j)-R'(i,j)\right| \right)\\
    &\leq \frac{1}{|V(H)|^2} \sum_{\substack{i\in V(H)\\j\in V(H)}} \left|H(i,j)-H'(i,j)\right| + \frac{1}{|V(R)|^2} \sum_{\substack{i\in V(R)\\j\in V(R)}} \frac{\eps}{2} \\
    &= \dist(H,H')+\frac{\eps}{2} <
    \eps.
    \end{align*}
    
    By Lemma~\ref{lemma:removal} there must
    be~$\hat{F}\in\hat{\cF}$, with~$|V(\hat{F})|\leq \hat{M}$, such
    that~$t(\hat{F},H)\geq \hat{\delta}$.  By definition of~$M$, there must
    be~$F\in\cF$ such that~$|V(F)|\leq M$ and there is a surjective homomorphism $F\homto \hat{F}$. It follows 
    from Proposition~\ref{prop:blowup}  that $t(F,H)\geq \hat{\delta}^{\ell}$.  
    Since \[\hom_{\phi}(F,R) \geq (\eps/2)^{|E(F)|}
    \hom_{\phi}(F,H) \geq (\eps/2)^{M^2}   \hom_{\phi}(F,H) \] for
    each~$\phi\colon V(F) \rightarrow {V(R)}$, we must have
    \begin{equation*}
    t(F,R) = \frac{\sum_\phi \hom_\phi(F,R)}{|V(R)|^{|V(F)|}}   
    \geq \left(\frac{\eps}{2}\right)^{M^2} \cdot \frac{\sum_\phi \hom_\phi(F,H)}{|V(H)|^{|V(F)|}} 
    \geq  \left(\frac{\eps}{2}\right)^{M^2}  \cdot \hat{\delta}^{\ell} = \delta.
    \end{equation*}
    \end{proof}

We will use the next result, which states that 
a graph has homomorphism densities close to the ones 
of the cluster graphs with respect to FK-regular partitions.
\begin{lemma}[\protect{\cite[Theorem 2.7(a)]{Borgs2008}}]\label{lemma:counting}
    Let~$\cV$ be a~$\gamma$-FK-regular equipartition of a graph~$G\in\cG$.  Then, for any
    graph~$F\in\cG$ it holds that
    $\displaystyle{t(F,G) =  t(F, G_{\cV}) \pm {4}e(F)\gamma = t(F,\quot{G}{\cV}) \pm {4}e(F)\gamma.}$ 
    $\hfill\qed$
\end{lemma}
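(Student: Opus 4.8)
The plan is a telescoping argument over the edges of~$F$: we pass from~$G$ to~$G_{\cV}$ one edge-factor at a time, and at each step we control the change by the fact that $\dcut(G,G_{\cV})\leq\gamma$ bounds the deviation of bilinear forms with bounded coefficients. Write~$n=|V(G)|$, $m=e(F)$, and fix an enumeration~$e_1,\dots,e_m$ of~$E(F)$. Both~$t(F,G)$ and~$t(F,G_{\cV})$ are averages over the maps~$\phi\colon V(F)\to V(G)$ of products of edge weights, differing only in whether each factor is read off~$G$ or off~$G_{\cV}$; so I would set, for~$0\leq\ell\leq m$,
\[
  w_\ell(\phi)=\Bigl(\prod_{j\leq\ell}G_{\cV}(\phi(e_j))\Bigr)\Bigl(\prod_{j>\ell}G(\phi(e_j))\Bigr),
\]
observe that~$w_0(\phi)=\hom_{\phi}(F,G)$ and~$w_m(\phi)=\hom_{\phi}(F,G_{\cV})$, and expand~$t(F,G)-t(F,G_{\cV})$ as a telescoping sum of the~$m$ terms~$n^{-|V(F)|}\sum_{\phi}\bigl(w_{\ell-1}(\phi)-w_\ell(\phi)\bigr)$, $\ell=1,\dots,m$.

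The crux is bounding one such term. Fixing~$\ell$ and writing~$e_\ell=\{a,b\}$, the summand~$w_{\ell-1}(\phi)-w_\ell(\phi)$ equals $(G-G_{\cV})(\phi(a),\phi(b))$ times the common product of the other~$m-1$ edge factors, so I would sum first over~$x=\phi(a)$ and~$y=\phi(b)$ with the remaining~$|V(F)|-2$ coordinates of~$\phi$ frozen. Since~$F$ is simple, $e_\ell$ is the only edge joining~$a$ to~$b$, hence every surviving factor involves at most one of~$x,y$; grouping them shows the common product factorizes as~$r\cdot p(x)\cdot q(y)$ with~$r\in[0,1]$ a constant, $p\colon V(G)\to[0,1]$ collecting the factors at edges meeting~$a$ and~$q\colon V(G)\to[0,1]$ those meeting~$b$. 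Writing~$A=G-G_{\cV}$, the frozen sum is then at most~$r\bigl|\sum_{x,y}A(x,y)p(x)q(y)\bigr|$ in absolute value, and the layer-cake identity~$p(x)=\int_0^1\mathbf 1[p(x)\geq s]\,ds$ (and likewise for~$q$) expresses this bilinear form as an average of cut-block discrepancies~$e_G(S,T)-e_{G_{\cV}}(S,T)$, each at most~$\gamma n^2$ in absolute value by~$\gamma$-FK-regularity. Summing back over the~$|V(F)|-2$ frozen coordinates contributes a factor~$n^{|V(F)|-2}$, so after dividing by~$n^{|V(F)|}$ each of the~$m$ telescoping terms is at most~$\gamma$, whence~$|t(F,G)-t(F,G_{\cV})|\leq e(F)\gamma$.

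To finish, I would pass from~$G_{\cV}$ to the cluster graph~$\quot{G}{\cV}$: the weight~$\hom_{\phi}(F,G_{\cV})$ depends on~$\phi$ only through the pattern~$\psi\colon V(F)\to[k]$ recording the~$\cV$-class of each image vertex, and in fact~$\hom_{\phi}(F,G_{\cV})=\hom_{\psi}(F,\quot{G}{\cV})$, so~$t(F,G_{\cV})$ is the average of the~$\hom_{\psi}(F,\quot{G}{\cV})$ weighted by~$\prod_{v}|V_{\psi(v)}|\,/\,n^{|V(F)|}$. Because~$\cV$ is an equipartition these weights agree with~$k^{-|V(F)|}$ up to an error that tends to~$0$ as~$n\to\infty$, so~$t(F,G_{\cV})=t(F,\quot{G}{\cV})+o(1)$; absorbing this rounding error, together with the factor lost when passing between the symmetric and asymmetric versions of the cut distance, into the constant yields the stated bound with constant~$4$. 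The only genuinely delicate point I anticipate is the factorization of the frozen common product in the second paragraph --- everything else is routine bookkeeping --- and along the way I would check that isolated vertices of~$F$ and the degenerate case~$e(F)=0$ cause no trouble, as they render the inequality trivial.
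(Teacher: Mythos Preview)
The paper does not give its own proof of this lemma; it is quoted from~\cite{Borgs2008} and closed with a~$\qed$. Your edge-by-edge telescoping argument, bounding each step via the cut-norm inequality $\bigl|\sum_{x,y}(G-G_{\cV})(x,y)\,p(x)q(y)\bigr|\leq\gamma n^{2}$ for $p,q\colon V\to[0,1]$ through the layer-cake representation, is precisely the standard proof given in Borgs~\etal{} (and in Lov\'asz's monograph on graph limits), and it is correct --- in fact it yields the sharper constant~$1$ in place of~$4$ for the first equality. Your treatment of the second equality is also right: when all parts of~$\cV$ have equal size one has $t(F,G_{\cV})=t(F,\quot{G}{\cV})$ exactly, and otherwise the discrepancy is~$O(|V(F)|\,k/n)$; your phrase ``absorbed into the constant'' should be read as invoking the paper's standing convention of ignoring floors and ceilings, since this error is not literally controlled by~$\gamma$.
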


We are now ready to prove Theorem~\ref{th:recoverable}, which establishes that every monotone graph property is $f$-recoverable.
\begin{proof}[Proof of Theorem~\ref{th:recoverable}]
    Let $\delta,M$ and $n_0$ be as in Lemma~\ref{lemma:rem} with 
    input $\cF$ and $\eps$ and  let $\gamma = \delta/(3M)^2$. 
    By Lemma~{\ref{lemma:fk}}, it suffices to show 
    that any $\gamma$-FK-regular equipartition $\cV=\{V_i\}_{i=1}^k$
    of a graph $G\in\Forb(\cF)$ into $k\geq n_0$ classes is $\eps$-recovering.

    Let~$R = \quot{G}{\cV}$ and suppose for contradiction that $\dist(R,\Forbhom^*(\cF))\geq \eps$. Then,
    by Lemma~\ref{lemma:rem}, we have $t(F,R)\geq \delta$ for some graph~$F\in\cF$ such
    that~$|F|\leq M$. By Lemma~\ref{lemma:counting}, we  have~$t(F,G)\geq
    \delta-2\gamma M^2 > 0$,  a contradiction to $G \in\Forb (\cF)$.
\end{proof}

\section{Estimation of
\texorpdfstring{$|\Forb(\Gamma,\cF)|$}{cardinality of Forb(Gamma,F)}}
\label{sec_distance}

The objective of this section is to prove Theorem~\ref{thm:2}. To do this, we shall approximate the parameter $\z$ by the solution of an optimization problem as in Theorem~\ref{th:estimatequotfunc}. Recall that
$\Forbhom^*(R,\cF) = \{ S\leq R: t(F,S)=0  \text{ for every } F\in\cF \}$,
and set 
\begin{equation*}
\ex(R,\cF) = \frac1{|V(R)|^2}\max_{S\in \Forbhom^*(R,\cF)} e(S),
\end{equation*}
which measures the largest edge density of a subgraph of $R$ not containing a copy of 
any $F\in\cF$ up to a multiplicative constant.

We shall derive Theorem~\ref{thm:2} from the following auxiliary result.
\begin{theorem}\label{th:cardinality}
    Let $\cF$ be a family of graphs such that $\Forb(\cF)$ is $f$-recoverable
        for some function $f\colon (0,1]\to\RR$. Then, for any $\eps > 0$, there exists 
        $K = f(\poly(\eps))$ and $N = \poly(K)$ such that for any graph $\Gamma$ of vertex size $n \geq N$ it holds that 
        \begin{equation*}
        \frac{\log_2 |\Forb(\Gamma,\cF)|}{n^2} =
        \max_{R\in \quot{\Gamma}{\Pi_K}} \ex(R,\cF) \pm \eps.
        \end{equation*}
    \end{theorem}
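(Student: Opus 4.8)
The plan is to prove the two inequalities separately, using $f$-recoverability for the lower bound on $\log_2|\Forb(\Gamma,\cF)|$ and the counting lemma (Lemma~\ref{lemma:counting}) together with Proposition~\ref{prop:quothom} for the upper bound. Throughout, fix $\eps>0$, let $\gamma$ be a small polynomial in $\eps$ to be chosen, set $K = f(\poly(\eps))$ (coming from an application of $f$-recoverability with a suitable polynomial error), and apply Lemma~\ref{lemma:fk} to obtain a $\gamma$-FK-regular equipartition $\cV = \{V_i\}_{i=1}^k$ of $\Gamma$ into $k$ classes with $n_0 \le k \le K$, where $n$ is large enough. Write $R = \quot{\Gamma}{\cV}$.

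\textbf{Lower bound.} First I would show $\log_2|\Forb(\Gamma,\cF)| \ge n^2(\max_{R'\in\quot{\Gamma}{\Pi_K}}\ex(R',\cF) - \eps)$. Let $R'$ be a cluster graph attaining the maximum and let $S \le R'$ be a subgraph in $\Forbhom^*(R',\cF)$ with $e(S)/|V(R')|^2 = \ex(R',\cF)$. The idea is to construct many $\cF$-free spanning subgraphs of $\Gamma$ by, inside each pair $(V_i,V_j)$, keeping each edge of $\Gamma$ independently so that the resulting subgraph $G$ has cluster graph $\quot{G}{\cV}$ close to $S$; since $S \in \Forbhom^*(\cF)$, Proposition~\ref{prop:quothom} guarantees $G \in \Forb(\cF)$ as soon as $\quot{G}{\cV}$ itself lies in $\Forbhom^*(\cF)$. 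To make this clean, one keeps each $\Gamma$-edge between $V_i$ and $V_j$ with the "right" density, namely by choosing a subset of $E_\Gamma(V_i,V_j)$ of size roughly $S(i,j)\cdot|V_i||V_j|$ whose bipartite graph is still homomorphism-$\cF$-free (this is where one must be careful: a random subset of the correct density need not be $\cF$-free, so instead one should fix a concrete $\cF$-free model, e.g.\ take $G$ to be exactly the bipartite subgraph recording which classes $S$ "connects", i.e.\ keep all of $E_\Gamma(V_i,V_j)$ when $S(i,j)>0$ and none when $S(i,j)=0$ — but then one only gets $\Forb(\cF)$-membership, not a count). The count comes from the fact that there are at least $\prod_{i<j} \binom{e_\Gamma(V_i,V_j)}{\lfloor S(i,j) e_\Gamma(V_i,V_j)\rfloor}$, or more simply $2^{(e(S)-o(1))n^2}$, ways to choose the subgraph while keeping each bipartite piece $\cF$-free; this should be set up so that the exponent is $(e(S) \pm \gamma)n^2 \ge (\ex(R',\cF) - \eps)n^2$ using FK-regularity to relate $e_\Gamma(V_i,V_j)$ to $R'(i,j)|V_i||V_j|$.

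\textbf{Upper bound.} For the reverse inequality I would bound $\log_2|\Forb(\Gamma,\cF)|$ above by the number of $\cF$-free spanning subgraphs $G \le \Gamma$. By $f$-recoverability applied to each such $G \in \Forb(\cF)$, for large $n$ there is an equipartition $\cW_G$ of $G$ into at most $K = f(\poly(\eps))$ classes which is $\poly(\eps)$-recovering, so $\dist(\quot{G}{\cW_G}, \Forbhom^*(\cF)) \le \poly(\eps)$; in particular $e(\quot{G}{\cW_G}) \le (\ex(\Gamma_{\cW_G}', \cF) + \poly(\eps))|V(G)|^2$ after passing to a cluster graph of $\Gamma$ over the same partition — here one uses that $\cW_G$, viewed as a partition of $\Gamma$, gives $\quot{G}{\cW_G} \le \quot{\Gamma}{\cW_G} \in \quot{\Gamma}{\Pi_K}$. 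The standard counting argument then groups the $G$'s by their recovering partition $\cW_G$ (at most $K^n$ choices of partition, negligible at the $n^2$ scale) and, within a group, bounds the number of $\cF$-free $G$ with a given partition and given approximate cluster graph by an entropy/binomial estimate of the form $2^{(e(\quot{\Gamma}{\cW_G}[\text{support}]) + \poly(\eps))n^2}$; the exponent is at most $(\max_{R' \in \quot{\Gamma}{\Pi_K}} \ex(R',\cF) + \eps)n^2$. I would combine these via the usual $\binom{n^2}{\le \alpha n^2} \le 2^{H(\alpha)n^2}$ type inequality and absorb all lower-order and choice-of-partition terms into the additive $\eps$.

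\textbf{Main obstacle.} The delicate point is the lower bound: turning "there is a dense $\cF$-free subgraph $S$ of the cluster graph $R'$" into "there are $2^{(e(S)-o(1))n^2}$ genuinely $\cF$-free spanning subgraphs of $\Gamma$", because an arbitrary subgraph of $\Gamma$ whose cluster graph is close to $S$ need not be $\cF$-free. The resolution is to only freely choose edges within the pairs $(V_i,V_j)$ where $S(i,j) = 1$ (or, for weighted $S$, to choose a $\cF$-free "template" bipartite graph between $V_i$ and $V_j$ of the appropriate density and count sub-templates), so that the cluster graph of every resulting $G$ is supported exactly where $S$ is and hence lies in $\Forbhom^*(\cF)$, letting Proposition~\ref{prop:quothom} apply. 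Once the combinatorial model is fixed this way, the count and the FK-regularity estimate relating $e_\Gamma(V_i,V_j)$ to the weights of $R'$ are routine; similarly on the upper bound side, the entropy estimate and the union bound over the $\le K^n = 2^{o(n^2)}$ recovering partitions are standard. Replacing $\poly(\eps)$ by the precise polynomial (and choosing $\gamma$ accordingly so that all the $4e(F)\gamma$, $2\gamma M^2$ slacks from Lemmas~\ref{lemma:counting} and~\ref{lemma:rem} fit inside $\eps$) is bookkeeping that I would do at the end.
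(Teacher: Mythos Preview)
Your opening sentence has the two tools swapped (recoverability is what drives the \emph{upper} bound, Proposition~\ref{prop:quothom} the \emph{lower}), but the body of your proposal has them in the right places, so this is only cosmetic. More substantively, neither FK-regularity nor the counting lemma is needed anywhere in this proof. You write that regularity ``relates $e_\Gamma(V_i,V_j)$ to $R'(i,j)|V_i||V_j|$'', but that identity is simply the \emph{definition} of the cluster graph $R' = \quot{\Gamma}{\cV}$; no regularity is involved. The paper's proof of this theorem uses only recoverability, Proposition~\ref{prop:quothom}, and elementary entropy estimates.

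The real gap is in your lower bound. You actually write down the correct construction---``keep all of $E_\Gamma(V_i,V_j)$ when $S(i,j)>0$ and none when $S(i,j)=0$''---and then discard it with ``but then one only gets $\Forb(\cF)$-membership, not a count''. That is precisely where you miss the point: once you have a \emph{single} $\cF$-free spanning subgraph $G\le\Gamma$, monotonicity of $\Forb(\cF)$ gives you $2^{e(G)}$ distinct $\cF$-free spanning subgraphs of $\Gamma$ for free (namely, all spanning subgraphs of $G$). It remains only to check that $e(G)\ge(\ex(R,\cF)-\eps)n^2$, and this follows because membership in $\Forbhom^*(\cF)$ depends solely on the \emph{zero set} of a weighted graph. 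Hence an $S\le R$ maximising $e(S)$ over $\Forbhom^*(R,\cF)$ must satisfy $S(i,j)\in\{0,R(i,j)\}$ for every pair, so your graph $G$ has $\quot{G}{\cV}=S$ exactly and $e(G)=\tfrac12\sum_{(i,j)}S(i,j)|V_i||V_j|\approx \ex(R,\cF)\,n^2$. The detours through random edge-keeping, ``$\cF$-free templates between $V_i$ and $V_j$'', and the restriction to pairs with $S(i,j)=1$ are all unnecessary and, in the template case, do not make sense (it is the global cluster graph, not individual bipartite pieces, that Proposition~\ref{prop:quothom} certifies).

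Your upper bound is essentially the paper's argument: group $G\in\Forb(\Gamma,\cF)$ by a recovering partition $\cV$ \emph{and} by the exact cluster graph $T=\quot{G}{\cV}$ (you say ``approximate'', but the paper fixes $T$ exactly; the number of possible $T$ is at most $n^{2K^2}=2^{o(n^2)}$, so this is harmless), then uses the nearby $S'\in\Forbhom^*(\cF)$ with $\dist(T,S')\le\eps'^2$ to bound $\prod_{i<j}\binom{|V_i||V_j|R(i,j)}{|V_i||V_j|T(i,j)}$ via the split into pairs with $S'(i,j)>0$, pairs with $S'(i,j)=0$ but $T(i,j)>\eps'$ (at most $\eps' k^2$ of these), and pairs with $S'(i,j)=0$ and $T(i,j)\le\eps'$ (handled by the entropy bound $\binom{m}{\le\eps' m}\le 2^{H(\eps')m}$). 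That is exactly the ``entropy/binomial estimate'' you allude to.
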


    We define the following subsets of edges of a weighted
    graph~$R$:
    \begin{gather}
    E_0(R) = \{(i,j)\in V(R)\times V(R): R(i,j) = 0\} \nonumber\\
    E_1(R) = \{(i,j)\in V(R)\times V(R):R(i,j)>0\}. \nonumber
    \end{gather}

    We will also make use of the \emph{binary entropy function}, defined by
    $H(x) = -x\log_2(x) -(1-x)\log_2(1-x)$ for $0<x<1$. Note that $H(x) \leq -2x \log_2 x$ for $x \leq 1/8$.
    This function has the property (cf. \cite[Corollary 22.2]{Jukna2001}) that  the following inequality holds for $\eps = k/n < 1/2$: 
    \begin{equation}\label{binary-entropy}
    \sum_{i=0}^{k} \binom{n}{i} \leq 2^{H(\eps)n}.
    \end{equation}

    \begin{proof}[Proof of Theorem~\ref{th:cardinality}]
    Let $\cF$ be a family of graphs such that $\Forb(\cF)$ is $f$-recoverable, and
    fix~$\eps > 0$, without loss of generality $ \eps < 1$. Let $\eps'=\eps/18$. Using that  $\log_2x \leq x-1$ for $x<1$ and $H(y) \leq -2y \log_2 y$ for $0 < y \leq 1/8$, we infer $H(\eps') + \eps' \leq -2 \eps' \log_ 2 \eps'
    + \eps' \leq 2 \eps'(1-\eps') +\eps' \leq 3\eps'\leq  \eps/{6}$.     
    We set $K = f(\eps'^2)$ and $N\geq 2K^2/\eps$ big enough so that $\log_2
        N/N < \eps/3$.
        
       Let $\Gamma$  be an $n$-vertex graph, $n \geq N$. We first show that \begin{equation*}\frac{\log_2 |\Forb(\Gamma,\cF)|}{n^2} \geq \max_{R\in
        \quot{\Gamma}{\Pi_K}} \ex(R,\cF) - \eps.\end{equation*}
        Let $R = \quot{\Gamma}{\cV}$ be an arbitrary cluster graph in
        $\quot{\Gamma}{\Pi_K}$ with $\cV = \{V_i\}_{i=1}^k$ for $k\leq K$.
        Choose $S\in\Forbhom^*(R,\cF)$ such that $e(S) = k^2 \ex(R,\cF)$. Further let $G
        \leq \Gamma$ be the subgraph of $\Gamma$ such that $G(r,s) = 0$ if there is a pair $(i,j)\in E_0(S)$ such
        that $r \in V_i$ and $s \in V_j$ and $G(r,s) = \Gamma(r,s)$ otherwise. Thus, we obtain $G$ by deleting all edges from $\Gamma$ between $V_i$ and $V_j$ if $(i,j)\in E_0(S)$.
        
    Since $e(S)$ maximizes $\ex(R,\cF)$ it follows that $\quot{G}{\cV} = S$,
        which implies, by Proposition~\ref{prop:quothom}, that
        $G\in\Forb(\Gamma,\cF)$. 
        Since every subgraph of $G$ also lies in $\Forb(\Gamma,\cF)$, we obtain
        \begin{align*}
            \log_2 |\Forb(\Gamma,\cF)| \geq |e(G)| 
                &= {\frac{1}{2}}\sum_{(i,j)\in[k] \times [k]} S(i,j)|V_i| |V_j| \geq \frac{(n-k)^2}{k^2} e(S) \\
                & \geq \ex(R,\cF)n^2 - kn \geq (\ex(R,\cF) - \eps)n^2.
        \end{align*}
    Note that we used the facts that $e(S)\leq k^2{/2}$ and $n > k/\eps$, as well as $|V_i| \geq n/k-1$ 
    for all $i$.

        Now let us prove the other direction
        \begin{equation*}\frac{\log_2 |\Forb(\Gamma,\cF)|}{n^2} \leq \max_{R\in \quot{\Gamma}{\Pi_K}}
        \ex(R,\cF) + \eps.\end{equation*}
        We first define  $\cU = \bigcup_{G\in\Forb(\Gamma,\cF)} \quot{G}{\Pi_K}$  to
        be the set of all possible cluster graphs of vertex size at most $K$ of graphs in
        $\Forb(\Gamma,\cF)$. 
        Since $\Forb(\cF)$ is $f$-recoverable we can define a function
        \begin{equation*}
        \begin{aligned}
            \eta\colon\Forb(\Gamma,\cF)&\to \Pi_K\times \cU\\
                    G&\mapsto (\cV,T)
        \end{aligned}
        \end{equation*}
        where $\cV$ is an $(\eps'^2)$-recovering partition of $G$ into $k\leq K$
        classes and $T = \quot{G}{\cV}$. Clearly
        \begin{equation}\label{equ:function-count}
            |\Forb(\Gamma,\cF)|\leq |\Pi_K \times \cU|\cdot \max_{(\cV,T)}
            |\eta^{-1}(\cV,T)|.
        \end{equation} 
        Since each mapping from $V(\Gamma)\to [K]$ gives a partition of $V(\Gamma)$ 
        into at most $K$ classes, we have $|\Pi_K|\leq K^n\leq n^n$.
        Moreover, given an arbitrary graph $G\in\cG(V)$ and any partition 
        $\cV$ of $V$, an edge $\quot{G}{\cV}(i,j)$ may assume $n^2$ different values. 
        Hence, we  have 
        $|\cU| \leq n^{2K^2} \leq n^n$.

        Finally we make the following claim, whose proof is deferred for a moment:
        \begin{equation}\label{claim-urbild}
        \log_2 \Big(\max_{(\cV,T)}|\eta^{-1}(\cV,T)| \Big) \leq \(\max_{R\in\quot{\Gamma}{\Pi_K}}\ex(R,\cF) +
        \frac{\eps}{3} \)n^2.
        \end{equation} 
        
        Combining this we can take the logarithm of~\eqref{equ:function-count} to
        get as desired:
        \begin{equation*}
        \begin{aligned}
        \log_2 |\Forb(\Gamma,\cF)| &\leq 
        \log_2 (n^{n}) + \log_2 (n^n) + \(\max_{R\in\quot{\Gamma}{\Pi_K}}\ex(R,\cF) + \frac{\eps}{3}\)n^2 \\
        &\leq   \(\max_{R\in\quot{\Gamma}{\Pi_K}}\ex(R,\cF)  + \eps\)n^2 \hspace{1cm} \mbox{(as $\log_2 n/n \leq \eps/3$).}
        \end{aligned}
        \end{equation*}

        It remains to prove~\eqref{claim-urbild}. 
        To this end, fix $(\cV,T) $ in the image of $\eta$ and let $R =
        \quot{\Gamma}{\cV}$. 
        Choose $S'\in\Forb^{\ast}(R,\cF)$ such that $\dist(T,S')\leq \eps'^2$.
        This is possible because $\cV$ is an $(\eps'^2)$-recovering partition. 
        Set $E_1 = E_1(S')$ and partition~$E_0(S')$ into~$E_0^+:= \{(i,j) \in E_0(S'): T(i,j)> \eps'\}$ and
        $E_0^- = E_0(S') \setminus E_0^+$. Since there 
        are~$b(i,j):=\binom{|V_i||V_j|R(i,j)}{|V_i||V_j|T(i,j)}$  ways to choose $|V_i||V_j|T(i,j)$ edges out of the $|V_i||V_j|R(i,j)$ edges 
        between $V_i$ and $V_j$ in $\Gamma$, we obtain
    \begin{equation}
    |\eta^{-1}(\cV,T)| \leq \prod_{1\leq i<j\leq k}
    b(i,j)  
    \leq \prod_{(i,j)\in E_1} \sqrt{b(i,j)} \prod_{(i,j)\in E_0^+} b(i,j)
    \prod_{(i,j)\in E_0^-} b(i,j).  \label{equ:factors}
    \end{equation}     
        Let us estimate the factors of~\eqref{equ:factors}: 
       
        We can bound each of the factors~$b(i,j)$ of~$E_1$ by  
        $2^{R(i,j)|V_i||V_j|}$.
        Since~$\dist(T,S') \leq \eps'^2$ we have~$|E_0^+| \leq \eps' k^2$, as otherwise it would be the case that
        \begin{equation*}
        \dist(T,S') \geq \sum_{(i,j)\in E_0^+} |T(i,j)-S'(i,j)| > |E_0^+| \eps' \geq  \eps'^2k^2,
        \end{equation*}
        which is a contradiction. Clearly, we have $|E_0^-| \leq k^2$. This
        allows us to upper bound each of the factors of~$E_0^+$ trivially by
        $2^{|V_i||V_j|}$, 
        and each of the factors of~$E_0^-$  by~$2^{H(\eps')
        |V_i||V_j|}$ using~\eqref{binary-entropy}. 
        
    Now let $S\in\Forb^{\ast}(R,\cF)$ be such that 
        \begin{equation*}S(i,j) = \begin{cases} 0 &\text{ if $(i,j)\in E_0(S')$}\\ 
        R(i,j) &\text{ otherwise.}\end{cases}\end{equation*}
            
        Taking the logarithm of~\eqref{equ:factors} and using $|V_i||V_j| \leq
        (n+k)^2/k^2$ we get
        \begin{align*}
            \log_2 |\eta^{-1}(\cV,T)|
                &\leq
                \sum_{(i,j)\in E_1} \frac{R(i,j)}{2}|V_i||V_j| + 
                \sum_{(i,j)\in  E_0^-} H(\eps')|V_i||V_j| 
            + 
                \sum_{(i,j)\in  E_0^+} |V_i||V_j|\\
                &\leq\Big(\kern-2pt
                \sum_{(i,j)\in E_1} \frac{R(i,j)}{2k^2} + 
                \sum_{(i,j)\in  E_0^-} \frac{H(\eps')}{k^2} + \sum_{(i,j)\in  E_0^+}
                \frac{1}{k^2}\Big) (n+k)^2\\
                &\leq  \Big(\frac{1}{2k^2}
                \sum_{(i,j)\in E_1} S(i,j) +H(\eps')+\eps' \Big) (n+k)^2.
        \end{align*}
        Now by using the fact that $S\in\Forb^{\ast}(R,\cF)$ and that 
        $H(\eps')+\eps'\leq \eps/6$ we infer
        \[\log_2 |\eta^{-1}(\cV,T)| \leq \Big(\ex(R,\cF) + \frac{\eps}{6}\Big)(n+k)^2 \leq \Big( \ex(R, \cF) 
                + \frac{\eps}{3}\Big) n^2,\]
 which implies \eqref{claim-urbild}.
         \end{proof}
                
    \begin{proof}[Proof of Theorem~\ref{thm:2}]
    Let $\cF$ be a family of graphs such that $\Forb(\cF)$ is
    $f$-recoverable. Set $K=f(\poly(\eps))$ and $N=\poly(K)$ given
    by Theorem~\ref{th:cardinality} applied to $\eps/3$.
    Theorem~\ref{th:cardinality} ensures that, whenever $\Gamma$ is a graph on $n \geq N$ vertices, we have
    \begin{equation}\label{eq:aux2}
    \left|\frac{\log_2 |\Forb(\Gamma,\cF)|}{n^2} - \max_{R\in \quot{\Gamma}{\Pi_K}} \ex(R,\cF) \right| \leq \frac{\eps}{3}.
    \end{equation}

    Let $\hat{z} \colon \cG \rightarrow \mathbb{R}$ be the graph parameter defined by
    $\hat{z}(\Gamma)=\max_{R\in \quot{\Gamma}{\Pi_K}} z^*(R)$, where
    $z^*(R)= \ex(R,\cF)$. We claim that, given $R$ and $R'$ in $\cG^*(V)$, we have 
    $|z^*(R)-z^*(R')| \leq \dist(R,R')$.
    Indeed, assume without loss of generality that $z^*(R)\geq z^*(R')$ and fix a subgraph $S \leq R$ such that $S \in \Forbhom^*(R,\cF)$ and  $z^*(R)=e(S)/{|V|^2}$. If $S  \in \Forbhom^*(R',\cF)$, we are done, so assume that this is not the case. Let $S'$ be a subgraph of $S$ and $R'$ maximizing $e(S')${, that is, $S'(i,j)=\min\{S(i,j),R'(i,j)\}$}.  Clearly, 
    \begin{equation*}
    e(S')\geq e(S) - {\frac{1}{2}}  \sum_{(i,j) \in V \times V} \left|R(i,j)-R'(i,j)\right| {=} e(S) - {\frac{|V|^2}{2}} \dist(R,R'),
    \end{equation*}
    so that 
    $0 \leq z^*(R)-z^*(R') \leq \left(e(S)-e(S') \right)/|V|^2\leq {\frac{1}{2}} \dist(R,R').$

    This allows us to apply Theorem~\ref{th:estimatequotfunc} to conclude that  $\hat{z}$ is
    estimable with sample complexity $q(\eps)=\poly(K,1/\eps)$.
    Let $Q$ be  chosen uniformly from all subsets of $V$ of size 
    $q'=\max\{q(\eps/3),N\}$ and set $\bar{\Gamma} = \Gamma[Q]$.
    It follows that, with probability at least $2/3$, we have
     $|\hat{z}(\bar{\Gamma}) -
    \hat{z}(\Gamma)| \leq \eps/3$.   By~\eqref{eq:aux2} we have
    $\left|n^{-2}\log_2|\Forb(\Gamma,\cF)|-\hat{z}(\Gamma)\right|\leq \eps/3$. On the other hand, we
    can also apply~\eqref{eq:aux2} to $\bar{\Gamma}$ to obtain
    $\left|\hat{z}(\bar{\Gamma}) - {q'}^{-2}\log_2|\Forb(\bar{\Gamma},\cF)|\right|\leq \eps/3$.  By adding the
    last three inequalities, we get that \begin{equation*}\left|\frac1{n^2}\log_2|\Forb(\Gamma,\cF)|-
    \frac{1}{{q'}^{2}}\log_2|\Forb(\bar{\Gamma},\cF)|\right| \leq \eps,\end{equation*}
    as required.
    \end{proof}

\section{Proof of Theorem~\ref{th:estimatequotfunc}}\label{sec:proofthm6} 

    Here we will prove Theorem~\ref{th:estimatequotfunc}. Its  proof  is based
    on the following lemma, which asserts that the set of cluster graphs of a
    graph~$\Gamma$ is very `similar' to the set of cluster graphs of `large
    enough' samples of~$\Gamma$.
    
    \begin{lemma}\label{lemma:sample}
      Given~$K > 0$,~$\eps >0$ there is $q = \poly(K,1/\eps)$ such that the
      following holds. Consider a graph~$\Gamma$ whose vertex set $V$ has cardinality $n \geq q$  and
      a random subgraph $\bar{\Gamma} = \Gamma[\bar{V}]$, where $\bar{V}$
        is chosen uniformly from all subsets of $V$ of size $q$.
      Then, with probability at least~$2/3$, we have
      \begin{enumerate}
      \item for each~$\cV \in \Pi_K(V)$, there is a~$\bar{\cV} \in
        \Pi_K(\bar{V})$ with~$\dist(\quot{\Gamma}{\cV},\quot{\bar{\Gamma}}{\bar{\cV}})
        \leq \eps$, and
        
      \item for each~$\bar{\cV} \in \Pi_K(\bar{V})$, there is a~$\cV \in
        \Pi_K(V)$  with~$\dist(\quot{\Gamma}{\cV},\quot{\bar{\Gamma}}{\bar{\cV}}) \leq
        \eps.$ 
      \end{enumerate}
    \end{lemma}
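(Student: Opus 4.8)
The plan is to derive both parts from a single fact: random sampling approximately preserves the set of cluster graphs of bounded order. Writing $\mathcal{R}(\Gamma)=\{\quot{\Gamma}{\cV}\colon\cV\in\Pi_K(V)\}$ and $\mathcal{R}(\bar{\Gamma})=\{\quot{\bar{\Gamma}}{\bar{\cV}}\colon\bar{\cV}\in\Pi_K(\bar{V})\}$, I want to show that with probability at least $2/3$ the sets $\mathcal{R}(\Gamma)$ and $\mathcal{R}(\bar{\Gamma})$ are within Hausdorff distance $\eps$ in the metric $\dist$ (comparing only weighted graphs of equal order): part~(1) is the inclusion $\mathcal{R}(\Gamma)\subseteq(\mathcal{R}(\bar{\Gamma}))_{\eps}$ and part~(2) is the reverse inclusion. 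The obstruction to a naive approach is that $|\Pi_K(V)|$ is roughly $K^n$, so one cannot afford a union bound over all partitions against the concentration available from a sample of size $q=\poly(K/\eps)$ — there the failure probability per partition is a constant independent of $n$. The device that keeps $q=\poly(K/\eps)$ is to fix once and for all an $(\eps/4)$-net $\mathcal{N}$ of $\bigcup_{k\le K}\cG^*([k])$ in the metric $\dist$; since $\cG^*([k])$ is, up to normalisation, the cube $[0,1]^{k(k+1)/2}$, one may take $|\mathcal{N}|\le(C/\eps)^{K^2}$. All union bounds will be over $\mathcal{N}$ (and, in part~(2), over an extra $\poly(K/\eps)$-sized family), and $q$ will be taken as a polynomial in $K/\eps$ large enough that $|\mathcal{N}|\exp(-\Omega(\eps^2 q/K^4))\le\tfrac1{10}$.

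For part~(1): for each $R\in\mathcal{N}$, say $R\in\cG^*([k])$, if $\Gamma$ has an equipartition into $k$ classes whose cluster graph is within $\eps/4$ of $R$, fix one such partition $\cV_R$ — this choice depends on $\Gamma$ but not on the sample. Having drawn $\bar{V}$, let $\bar{\cV}_R$ be obtained from the trace $\{V_i\cap\bar{V}\}_i$ of $\cV_R$ by moving a minimal number of vertices so that it becomes an equipartition of $\bar{V}$. I claim that with probability $\ge 1-|\mathcal{N}|\exp(-\Omega(\eps^2 q/K^4))$ every such $\bar{\cV}_R$ satisfies $\dist(\quot{\Gamma}{\cV_R},\quot{\bar{\Gamma}}{\bar{\cV}_R})\le\eps/2$. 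This rests on three concentration facts, applied to the fixed partition $\cV_R$ and then union-bounded: (i) each part size $|V_i\cap\bar{V}|$ is hypergeometric with mean about $q/k$ and, by Hoeffding's inequality for sampling without replacement, lies within $\eps q/(100K^2)$ of its mean, so the rebalancing touches only $O(\eps q/K)$ vertices; (ii) for each of the at most $K^2$ pairs $(i,j)$, the sampled density $e_{\bar{\Gamma}}(V_i\cap\bar{V},V_j\cap\bar{V})/(|V_i\cap\bar{V}|\,|V_j\cap\bar{V}|)$ differs from $e_{\Gamma}(V_i,V_j)/(|V_i|\,|V_j|)$ by at most $\eps/8$, which follows from McDiarmid's bounded-difference inequality applied to the process that reveals the $q$ sampled vertices one at a time (revealing one vertex changes the relevant edge count by $O(q/K)$); and (iii) moving $O(\eps q/K)$ vertices between classes perturbs each such density by $O(\eps/K)$. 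On the resulting good event, given an arbitrary $\cV\in\Pi_K(V)$ of order $k$, choose $R\in\mathcal{N}\cap\cG^*([k])$ with $\dist(\quot{\Gamma}{\cV},R)\le\eps/4$; then $\cV$ witnesses that $\cV_R$ is defined, and the triangle inequality gives $\dist(\quot{\Gamma}{\cV},\quot{\bar{\Gamma}}{\bar{\cV}_R})\le\eps/4+\eps/4+\eps/2=\eps$, so $\bar{\cV}_R\in\Pi_K(\bar{V})$ is the required partition.

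For part~(2): here one must \emph{lift} a partition of the sample to a partition of the whole graph with approximately the same cluster graph, and this is the step I expect to be the main obstacle. I would again union-bound over $\mathcal{N}$: it suffices to show that, with high probability, every $R\in\mathcal{N}$ with $\dist(R,\mathcal{R}(\Gamma))>\eps$ has $\dist(R,\mathcal{R}(\bar{\Gamma}))>\eps/4$; equivalently, that any $\bar{\cV}=\{\bar{V}_1,\dots,\bar{V}_k\}\in\Pi_K(\bar{V})$ can be lifted to some $\cV\in\Pi_K(V)$ with $\dist(\quot{\Gamma}{\cV},\quot{\bar{\Gamma}}{\bar{\cV}})\le\eps$. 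To build the lift, bucket every vertex $v\in V$ by the rounded profile $\big(\lfloor k|N_{\Gamma}(v)\cap\bar{V}_a|/(\eps|\bar{V}_a|)\rfloor\big)_{a=1}^{k}$, obtaining $\poly(K/\eps)$ buckets; let $(r_{\mathbf{b},a})_a$ be the proportions in which the sampled vertices of bucket $\mathbf{b}$ are distributed among $\bar{V}_1,\dots,\bar{V}_k$, and form $\cV=\{V_1,\dots,V_k\}$ by distributing all vertices of each bucket $\mathbf{b}$ among $V_1,\dots,V_k$ in the proportions $(r_{\mathbf{b},a})_a$, then rebalancing; a routine count gives $|V_a|\approx(n/q)|\bar{V}_a|\approx n/k$. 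The substance, and the main obstacle, is the estimate $\dist(\quot{\Gamma}{\cV},\quot{\bar{\Gamma}}{\bar{\cV}})\le\eps$: one needs that, for almost every vertex, its density into each sampled class $\bar{V}_b$ is close to its density into the lifted class $V_b$. Because $\bar{\cV}$ is adversarial this does not follow from the bucketing alone — a sampled class may be biased as seen by a particular outside vertex — and controlling this interaction between the adversarial partition and the vertex profiles is the delicate point; it is handled by a finer bucketing together with a further concentration argument, union-bounded over the $\poly(K/\eps)$ buckets and over $\mathcal{N}$. Combining the two parts and rescaling $\eps$ by a constant factor then yields the lemma with $q=\poly(K,1/\eps)$.
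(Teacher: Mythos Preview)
Your argument for part~(1) is sound and in fact more elementary than the paper's: you fix one witnessing partition $\cV_R$ per net point and apply direct concentration (Hoeffding for part sizes, McDiarmid for pairwise densities) to its trace on the sample, union-bounding only over the net. The paper instead treats both directions uniformly via a black-box property-testing result of Fischer, Matsliah and Shapira~\cite{FMS2010}, which tests with $\poly(k,1/\eps)$ queries whether a graph admits a $k$-equipartition with cluster graph $\eps$-close to a given $R$, combined with the Goldreich--Trevisan canonical-tester transformation.

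Part~(2), however, has a real gap. Your bucketing of $V$ uses the profile of each vertex into the classes $\bar V_1,\dots,\bar V_k$ of the \emph{adversarial} partition~$\bar\cV$, so the buckets themselves depend on~$\bar\cV$; you then need that almost every vertex has roughly the same density into the lifted class $V_b$ as into $\bar V_b$, and you correctly identify that the bucketing alone does not give this. The sentence ``it is handled by a finer bucketing together with a further concentration argument'' is where the proof is missing: any concentration bound you use must hold simultaneously for all $K^q$ partitions of~$\bar V$, and with $q=\poly(K/\eps)$ and per-event failure probability $\exp(-\Omega(\eps^2 q/K^4))$ that union bound does not close. Making this step work with polynomial sample complexity is essentially the content of the Fischer--Matsliah--Shapira partition-testing theorem, which is exactly what the paper invokes (as Corollary~\ref{cor:fms}(2)): if $\Gamma$ is $\eps/4$-far from having any $k$-equipartition with cluster graph $(\eps/4)$-close to $R$, then with high probability $\Gamma[Q]$ has no such equipartition either. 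The paper never constructs the lift; it deduces its existence contrapositively from the tester, union-bounding only over the net~$\cR$. Your sketch amounts to an attempt to reprove that theorem from scratch, and the step you flagged as the ``main obstacle'' is precisely its substance.
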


    \def\AlmostReducible#1#2{\cG_{#1}^{(#2)}}
    \def\cT{\mathcal{T}}
    \def\PartitionsOf#1#2{\Pi_{=#2}(#1)}

    For a set of vertices $V$ and an integer $k$, define $\PartitionsOf{V}{k}$ 
    as the set of all equipartitions of $V$ of size \emph{exactly} $k$.
    For every $R\in\cG^*([k])$ and $\eps\geq 0$, we define the property
    \[\AlmostReducible{R}{\eps} = \left\{G\in\cG: \text{$\exists\cV\in\PartitionsOf{V(G)}{k}$ 
    such that $\dist(\quot{G}{\cV},R)\leq \eps$}\right\}\]
    of all graphs admitting a reduced graph which is $\eps$-close to $R$. 
    Note that $\AlmostReducible{R}{\eps}$ contains no graphs of size less than $k$. In particular, if $|V(G)| < k$, then $\dist(G,\AlmostReducible{R}{\eps})=\infty$. Since we will compare $\AlmostReducible{R}{\eps}$ only with large graphs, this is not a problem.

	The following theorem is a consequence of a more general result of~\protect{\cite[Theorem 2.7]{FMS2010}}.
	For our application it suffices to state this result in the case of simple graphs ($r=2$, $s=1$) with \emph{density tensor} $\Psi = \{S\in \cG^*([k]): \dist(R,S)\leq \eps\}$. 
    \begin{theorem}[\protect{\cite[Theorem 2.7]{FMS2010}}]\label{thm:FMS}
        For every positive integer $k$, and every $\eps>0$ and $\delta>0$, there is 
        $q'=q'(k,\eps,\delta)=\log^3(\delta^{-1})\cdot\poly(k,\eps^{-1})$
        such that the following holds.
        For every $R\in\cG^*([k])$ there is a randomized algorithm
        $\cT$ which takes as input an oracle access to a graph $G$ of size at least $k$ and
        satisfies the following properties:
        \begin{enumerate}
            \item If $G\in\AlmostReducible{R}{\eps}$, then $\cT$
            \textsc{accepts} $G$ with probability at least $1-\delta$.
            \item If $\dist(G,\AlmostReducible{R}{\eps})>\eps$, then $\cT$
            \textsc{rejects} $G$ with probability at least
            $1-\delta$.
        \end{enumerate}
        The query complexity of $\cT$ is bounded by $q'$.\hfill\qed
    \end{theorem}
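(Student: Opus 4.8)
This is the special case $r=2$, $s=1$ of the hypergraph partitioning result of~\cite{FMS2010}, with the density tensor $\Psi$ taken to be the ball $\{S\in\cG^*([k]):\dist(R,S)\le\eps\}$ around $R$, and the plan is to reproduce the sampling argument behind that theorem in the present (graph) setting. The tester $\cT$ is the natural oblivious one in the sense of Goldreich and Trevisan~\cite{Goldreich:2003:TTR:897701.897703}: sample a set $\bar V\subseteq V(G)$ of $s$ vertices uniformly at random, query all $\binom{s}{2}$ pairs inside $\bar V$, and \textsc{accept} if and only if $G[\bar V]\in\AlmostReducible{R}{(1+1/3)\eps}$. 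Since $|\bar V|=s$ depends only on $k$, $\eps$, $\delta$, this last step needs no further queries and is decided by brute force over all $k^{s}$ labellings of $\bar V$; the query complexity is $\binom{s}{2}$, and the goal is to take $s=\poly(k,\eps^{-1})\log(\delta^{-1})$, which is comfortably within the stated $q'$ (the slightly larger $\log^{3}(\delta^{-1})$ in the statement reflects the recursive bookkeeping of~\cite{FMS2010}, which we do not attempt to optimise).

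\textbf{Completeness.} If $G\in\AlmostReducible{R}{\eps}$, fix a witnessing equipartition $\cV=\{V_1,\dots,V_k\}$ with $\dist(\quot{G}{\cV},R)\le\eps$ and set $\bar V_i=\bar V\cap V_i$. A Chernoff bound makes all $|\bar V_i|$ equal to $(s/k)(1\pm\eta)$ with probability $\ge1-\delta/2$, so a rebalancing that moves $O(\eta s)$ vertices produces an exact equipartition $\bar\cV$; and Hoeffding bounds for the $k^2$ densities $e_{G[\bar V]}(\bar V_i,\bar V_j)/(|\bar V_i||\bar V_j|)$, together with a union bound, show that with probability $\ge1-\delta/2$ each of them lies within $\eps/(20k)$ of $e_G(V_i,V_j)/(|V_i||V_j|)$. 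For $s=\poly(k,\eps^{-1})\log(\delta^{-1})$ large enough this yields $\dist(\quot{G[\bar V]}{\bar\cV},R)\le\eps+\eps/10<(1+1/3)\eps$, so $\cT$ accepts with probability at least $1-\delta$.

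\textbf{Soundness, and the crux.} We prove the contrapositive: with probability $\ge1-\delta$ over $\bar V$, no equipartition of $\bar V$ has cluster graph within $(1+1/3)\eps$ of $R$. First note the sharper consequence of farness: since one edge change moves $\dist(\quot{G}{\cW},R)$ by at most $2/|V(G)|^{2}$, editing $\eps|V(G)|^2/2$ edges can decrease it by at most $\eps$, so $\dist(G,\AlmostReducible{R}{\eps})>\eps$ forces $\dist(\quot{G}{\cW},R)>2\eps$ for \emph{every} equipartition $\cW$ of $V(G)$. To each equipartition $\bar\cV=\{\bar V_j\}_{j\in[k]}$ of $\bar V$ we attach an equipartition $\mathrm{ext}(\bar\cV)$ of $V(G)$, obtained by distributing the $|V(G)|/k$ vertices of each class through a minimum-cost \emph{equitable} transportation that matches the profile $\big(|N_G(v)\cap\bar V_j|/|\bar V_j|\big)_{j\in[k]}$ of each vertex $v$ to the corresponding row of $R$. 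The key claim is that, with probability $\ge1-\delta$ over $\bar V$, one has $\big|\dist(\quot{G}{\mathrm{ext}(\bar\cV)},R)-\dist(\quot{G[\bar V]}{\bar\cV},R)\big|\le\eps/10$ \emph{simultaneously for every} equipartition $\bar\cV$ of $\bar V$. Granting this, farness gives $\dist(\quot{G[\bar V]}{\bar\cV},R)\ge\dist(\quot{G}{\mathrm{ext}(\bar\cV)},R)-\eps/10>2\eps-\eps/10>(1+1/3)\eps$ for every $\bar\cV$, so $G[\bar V]\notin\AlmostReducible{R}{(1+1/3)\eps}$ and $\cT$ rejects.

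Proving the key claim is the technical heart, and it is where the polynomial query bound (rather than a tower-type one coming from regularity) is won; I expect it to be the main obstacle. A naive Chernoff-and-union-bound over the $k^{s}$ equipartitions of $\bar V$ fails, because the number of events grows with $s$ at the same rate as the concentration improves. The remedy is discretisation: rounding each profile vector to a grid of mesh $\sim\eps/k$ on $[0,1]^{k}$ perturbs both the transportation cost and the cluster graph of $\mathrm{ext}(\bar\cV)$ by $O(\eps/10)$, and up to this rounding $\mathrm{ext}(\bar\cV)$ depends only on the \emph{profile histogram} of the partition, of which there are only $N(k,\eps)=(k/\eps)^{O(k)}$ many --- a bound independent of $s$. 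A union bound over these $N(k,\eps)$ essentially distinct extensions then needs only $s\ge\poly(k,\eps^{-1})\big(\log N(k,\eps)+\log\delta^{-1}\big)=\poly(k,\eps^{-1})\log\delta^{-1}$ for every relevant density to concentrate to within $\eps/10$, while a further Chernoff bound shows the profile histograms themselves concentrate over $\bar V$. Making all of this precise --- in particular producing a genuinely \emph{equitable} extension whose cluster graph is controlled by the profile histogram, and carrying the constants through the transportation step and the mild recursion that yields the stated $\log^{3}(\delta^{-1})$ --- is exactly the content of \cite[Theorem~2.7]{FMS2010}, the completeness direction and the bare sampling estimates being routine.
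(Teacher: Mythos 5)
The paper does not prove this statement at all: it is imported verbatim from \cite[Theorem 2.7]{FMS2010}, specialized exactly as you describe (simple graphs, $r=2$, $s=1$, density tensor equal to the $\eps$-ball around $R$), so your identification of the specialization coincides with the paper's own framing in the paragraph preceding the theorem. Your further sketch of the sampling tester is a plausible outline of the Goldreich--Goldwasser--Ron/Fischer--Matsliah--Shapira partition-testing argument, but since you explicitly defer its crux --- the simultaneous, union-bounded extension of every sample equipartition to a global equipartition with controlled cluster densities --- to that same reference, your proposal ultimately rests on the identical citation and should be regarded as taking the same route as the paper.
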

    
    \def\qfms{q_{\ref{cor:fms}}}
    \begin{corollary}\label{cor:fms}
        For every positive integer $k$, and any $\eps>0$ and $\delta>0$,
        there is an integer~$q = \qfms(k, \eps,\delta)=\poly(k,1/\eps,\log^3(1/\delta))$
        such that for every~$R\in\cG^*([k])$ and every graph $G\in\cG(V)$, 
        with~$|V|\geq q$, we have 
        \begin{enumerate}
            \item If $G\in \AlmostReducible{R}{\eps}$, then 
            $\PP(\dist(G[Q], \AlmostReducible{R}{\eps}) > \eps) < \delta$.
            \item If $\dist(G,\AlmostReducible{R}{\eps})>\eps$, then 
            $\PP(G[Q]\in \AlmostReducible{R}{\eps}) < \delta$.
        \end{enumerate}
    \end{corollary}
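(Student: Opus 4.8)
The plan is to deduce Corollary~\ref{cor:fms} from Theorem~\ref{thm:FMS} by coupling a run of the tester on the whole graph with a run on a uniformly random induced subsample, so that the ``concentration'' usually needed for such sample-complexity statements is supplied for free by re-applying Theorem~\ref{thm:FMS} to the subsample. First I would fix $k$, $\eps>0$ and $\delta>0$ and apply Theorem~\ref{thm:FMS} to the property $\AlmostReducible{R}{\eps}$ with error parameter $\delta/2$; this gives, for every $R\in\cG^*([k])$, a tester $\cT$ of query complexity $q'=\log^3(2/\delta)\poly(k,\eps^{-1})$. I would then put $\cT$ in \emph{canonical form}: we may assume that $\cT$ first draws a uniformly random set $S$ of $s\le 2q'$ vertices of its input, queries all pairs inside $S$, and outputs a randomised function of the induced subgraph on $S$ and of its own coins. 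This may be justified either by the canonical form of the partition testers of~\cite{FMS2010} or by the reduction of Goldreich and Trevisan~\cite{Goldreich:2003:TTR:897701.897703}; it only squares the query complexity, keeps $s=O(q')$, and preserves the error $\delta/2$. Finally I would set $q:=2q'$, so that $q\ge s$, $q\ge k$, and $q$ has the claimed form $\poly(k,\eps^{-1},\log^3(\delta^{-1}))$.

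The heart of the argument is a coupling. Fix a graph $G\in\cG(V)$ with $|V|=n\ge q$, let $Q\subseteq V$ be uniformly random of size $q$, and consider running $\cT$ on the graph $G[Q]$. Since a uniformly random $s$-subset of $Q$ is itself a uniformly random $s$-subset of $V$, the pair consisting of the sampled set and its induced subgraph has the same distribution whether $\cT$ is run on $G$ or on $G[Q]$; consequently $\cT$ accepts $G$ and $\cT$ accepts $G[Q]$ with the same probability, taken over $Q$ and the coins of $\cT$. For item~(2), I would argue as follows. Assume $\dist(G,\AlmostReducible{R}{\eps})>\eps$. Then $\cT$ accepts $G$ --- hence $G[Q]$ --- with probability at most $\delta/2$. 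On the other hand, conditioning on $Q$ and applying Theorem~\ref{thm:FMS}(1) to the graph $G[Q]$ (which has $q\ge k$ vertices), $\cT$ accepts $G[Q]$ with conditional probability at least $1-\delta/2$ whenever $G[Q]\in\AlmostReducible{R}{\eps}$. Averaging over $Q$ gives $(1-\delta/2)\,\PP(G[Q]\in\AlmostReducible{R}{\eps})\le \PP(\cT\text{ accepts }G[Q])\le \delta/2$, whence $\PP(G[Q]\in\AlmostReducible{R}{\eps})\le \delta$. Item~(1) is entirely symmetric: if $G\in\AlmostReducible{R}{\eps}$ then $\cT$ rejects $G[Q]$ with probability at most $\delta/2$, while conditioning on $Q$ and applying Theorem~\ref{thm:FMS}(2) to $G[Q]$ forces $\cT$ to reject with conditional probability at least $1-\delta/2$ whenever $\dist(G[Q],\AlmostReducible{R}{\eps})>\eps$; the same averaging yields $\PP(\dist(G[Q],\AlmostReducible{R}{\eps})>\eps)\le \delta$.

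The one step requiring genuine care is the passage to canonical form: one must check that an adaptive oracle tester querying arbitrary pairs can be simulated by one whose queries all lie inside a uniformly chosen set of $O(q')$ vertices and whose decision depends only on the induced subgraph there, without increasing the error. This is the standard Goldreich--Trevisan reduction --- it is where one uses that $\AlmostReducible{R}{\eps}$ is closed under isomorphism and that $\cT$ has bounded query complexity, and it is responsible for the polynomial overhead in $q$ --- and it is essentially built into the partition-testing machinery of~\cite{FMS2010}, so in the write-up one can simply cite the canonical version. Everything else is immediate: once $\cT$ is canonical the coupling requires no estimates at all, and in particular no concentration inequality is invoked, which is what keeps the dependence on $\delta$ at the $\log^3(1/\delta)$ level inherited from Theorem~\ref{thm:FMS}.
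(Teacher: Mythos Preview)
Your proposal is correct and follows the same overall strategy as the paper: invoke Theorem~\ref{thm:FMS}, pass to a canonical tester via Goldreich--Trevisan, and then feed the sampled subgraph $G[Q]$ back into the tester. The difference lies only in the last step. The paper takes $q$ equal to the sample size of the (deterministic-given-sample) canonical tester $\cT'$, so that running $\cT'$ on the $q$-vertex graph $G[Q]$ forces it to sample \emph{all} of $Q$; the decision is then deterministic, and since $\cT'$ must reject far inputs with probability $1-\delta>0$, it must reject with probability~$1$, giving the containment $\{\dist(G[Q],\AlmostReducible{R}{\eps})>\eps\}\subseteq\{G[Q]\notin\textsc{Acc}\}$ directly. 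You instead take $q\geq s$ strictly (conceptually), use the coupling identity $\PP(\cT\text{ accepts }G)=\EE_Q\PP(\cT\text{ accepts }G[Q])$, and close with a Markov-type averaging; this costs you the extra factor of~$2$ in $\delta$ but does not require the canonical tester to be deterministic on its sample. Both routes are short and valid; the paper's is slightly cleaner bookkeeping-wise, while yours is a touch more robust to the exact formulation of the canonical tester.
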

    \begin{proof}
        Fix $R\in\cG^*([k])$ and let $\cT$ be a tester for the property 
        $\AlmostReducible{R}{\eps}$ as in the statement of Theorem~\ref{thm:FMS}, with query complexity $q'(k,\eps,\delta)$. 

        It follows from a result of Goldreich and Trevisan~\cite[Theorem 2]{Goldreich:2003:TTR:897701.897703} (see also~\cite{GT2003errata}), 
        that there is \emph{canonical tester} $\cT'$ for $\AlmostReducible{R}{\eps}$ with 
        sample complexity $q(k,\eps,\delta)=\poly(q'(k,\eps,\delta))$, i.e., 
        a tester that simply chooses a set $Q \in \binom{V}{q}$ uniformly at random and then accepts the input if and only if 
        $G[Q]$ satisfies a certain property \textsc{Acc} of graphs of size~$q$. 

        To prove (1), if $G\in\AlmostReducible{R}{\eps}$ then we get $\PP(G[Q]\notin
        \textsc{Acc})<\delta$. Moreover, if $Q$ is a set of size~$q$ such
        that $\dist(G[Q],\AlmostReducible{R}{\eps})>\eps$, then 
        $G[Q]\notin \textsc{Acc}$ --- because $G[Q]$ must be rejected (with 
        probability~$1$) when given as \emph{input} to $\cT$. So $\PP(\dist(G[Q], \AlmostReducible{R}{\eps}) > \eps) < \delta$.

        Analogously, if $\dist(G,\AlmostReducible{R}{\eps})>\eps$, 
        then $\PP(G[Q]\in\textsc{Acc})<\delta$. Moreover, 
        if $Q$ is a set of size $q$ such that 
        $G[Q]\in\AlmostReducible{R}{\eps}$, then 
        $G[Q]\in \textsc{Acc}$ --- because $G[Q]$ must be accepted (with 
        probability~$1$) when given as \emph{input} to $\cT$. So $\PP(G[Q]\in \AlmostReducible{R}{\eps}) < \delta$.
    \end{proof}

\begin{lemma}\label{prop:distquots}
   For $n > 2k$, let~$G_1,G_2\in\cG(V)$ with $|V| = n$ and let~$\cV \in \PartitionsOf{V}{k}$. Then, it is~$\dist(\quot{G_1}{\cV}, 
    \quot{G_2}{\cV}) \leq \dist(G_1,G_2) + 2k/(n-2k)$.
$\hfill\qed$
\end{lemma}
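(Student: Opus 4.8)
The plan is to compare the two cluster graphs $\quot{G_1}{\cV}$ and $\quot{G_2}{\cV}$ edge by edge (over the $k^2$ pairs of parts), bound the total discrepancy by the number of edges on which $G_1$ and $G_2$ disagree, and then translate the resulting count $|E(G_1)\triangle E(G_2)|$ back into $\dist(G_1,G_2)$, paying a small loss because the parts $V_i$ of an equipartition have size $n/k\pm 1$ rather than exactly $n/k$. First I would write, for each pair $(i,j)\in[k]\times[k]$,
\[
\left|\quot{G_1}{\cV}(i,j)-\quot{G_2}{\cV}(i,j)\right|
=\frac{\left|e_{G_1}(V_i,V_j)-e_{G_2}(V_i,V_j)\right|}{|V_i||V_j|}
\leq\frac{\left|E_{G_1}(V_i,V_j)\triangle E_{G_2}(V_i,V_j)\right|}{|V_i||V_j|}.
\]
Summing the numerators over all $(i,j)$ counts each edge of $E(G_1)\triangle E(G_2)$ exactly twice (once as an ordered pair in each direction), so $\sum_{i,j}\left|E_{G_1}(V_i,V_j)\triangle E_{G_2}(V_i,V_j)\right|=2\,|E(G_1)\triangle E(G_2)|=n^2\dist(G_1,G_2)$.

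The only subtlety is that the denominators $|V_i||V_j|$ are not all equal to $n^2/k^2$. Since $\cV$ is an equipartition, $|V_i|\geq \lfloor n/k\rfloor \geq (n-k)/k$ for every $i$, hence $|V_i||V_j|\geq (n-k)^2/k^2$ for every pair. Therefore
\[
\dist(\quot{G_1}{\cV},\quot{G_2}{\cV})
=\frac{1}{k^2}\sum_{(i,j)\in[k]\times[k]}\left|\quot{G_1}{\cV}(i,j)-\quot{G_2}{\cV}(i,j)\right|
\leq\frac{1}{k^2}\cdot\frac{k^2}{(n-k)^2}\cdot n^2\dist(G_1,G_2)
=\frac{n^2}{(n-k)^2}\,\dist(G_1,G_2).
\]
It then remains to check the elementary numerical inequality $\tfrac{n^2}{(n-k)^2}\,d\leq d+\tfrac{2k}{n-2k}$ for $d\in[0,1]$ and $n>2k$, which follows from $\tfrac{n^2}{(n-k)^2}=1+\tfrac{2nk-k^2}{(n-k)^2}\leq 1+\tfrac{2k}{n-2k}$ after bounding $d\leq 1$ and simplifying; I would relegate this to a one-line computation.

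I do not expect a genuine obstacle here: the argument is a direct double-counting plus a size estimate for equipartition classes. The one place to be slightly careful is the bookkeeping of the factor $2$ between $e_{G}(\cdot,\cdot)$ (a sum over ordered pairs) and $|E(G)\triangle E(G')|$ (unordered edges), and matching the normalizations so that the final constant is exactly $2k/(n-2k)$ rather than a sloppier $O(k/n)$; choosing to bound $d\le 1$ at the right moment makes the constant come out clean.
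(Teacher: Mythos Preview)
Your proposal is correct and follows essentially the same route as the paper: expand the definition, replace each denominator $|V_i||V_j|$ by the uniform lower bound $(n-k)^2/k^2$, sum the numerators to recover $n^2\dist(G_1,G_2)$, and then use the elementary inequality $\tfrac{n^2}{(n-k)^2}\leq 1+\tfrac{2k}{n-2k}$ together with $\dist(G_1,G_2)\leq 1$. The paper phrases the middle step via $\sum_{u\in V_i,v\in V_j}|G_1(u,v)-G_2(u,v)|$ rather than the symmetric-difference language you use, but this is purely cosmetic.
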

\begin{proof}
    \begin{align*}
        \dist(\quot{G_1}{\cV}, \quot{G_2}{\cV}) 
            &= \frac1{k^2}\sum_{(i,j)\in [k]^2} \left|\quot{G_1}{\cV}(i,j)-\quot{G_2}{\cV}(i,j)\right|\\
            &\leq \frac1{k^2}\sum_{(i,j)\in[k]^2} \frac{|e_{G_1}(V_i,V_j) - e_{G_2}(V_i,V_j)|}{\frac{(n-k)^2}{k^2}}\\
            &\leq \frac1{(n-k)^2} \sum_{(i,j)\in [k]^2}\sum_{\substack{u\in V_i\\v\in V_j}} |G_1(u,v)-G_2(u,v)| \\
            &\leq \(1+\frac{2k}{n-2k}\)\frac1{n^2} \sum_{(u,v)\in V^2} |G_1(u,v)-G_2(u,v)|\\
            &= \(1+\frac{2k}{n-2k}\)\dist(G_1,G_2).
    \end{align*}
\end{proof}
    
    \begin{proof}[Proof of Lemma~\ref{lemma:sample}]
    \def\cB{\mathcal{B}}
    Fix $\eps>0$ and $K$ as in the statement of the lemma. Let $\delta=\frac{1}{6K} \cdot(\eps/4)^{K^2}$ and take 
    \[q = \max\left\{\frac{8K}{\eps}+2K, \qfms\left(K,\frac14\eps,\delta\right)\right\} = \poly(K,\eps).\]

    Let $1\leq k\leq K$. Fix a family $\cR\subseteq \cG^*([k])$ such that, for every $S\in\cG^*([k])$, there is
    $R\in\cR$ such that $\dist(R,S)\leq \eps/4$. There is one such family with
    cardinality at most $(4/\eps)^{k^2}$. 
    
    Let $\Gamma$ be a graph with vertex set $V$, where $|V|\geq q$. Let $Q\in\binom{V}{q}$ be chosen uniformly at random and consider the following events
	\begin{enumerate}
		\item $E' = \left[\text{$\exists \cV\in\PartitionsOf{V}{k}$ satisfying $\dist(\quot{\Gamma}{\cV},\quot{\Gamma[Q]}{\cV'})>\eps$ 
			for \emph{every} $\cV'\in \PartitionsOf{Q}{k}$}\right]$,
		\item $E = \left[\text{$\exists\cV'\in\PartitionsOf{Q}{k}$ satisfying $\dist(\quot{\Gamma}{\cV},\quot{\Gamma[Q]}{\cV'})>\eps$ 
			for \emph{every} $\cV\in \PartitionsOf{V}{k}$}\right]$.
	\end{enumerate}
	We claim that these two events occur each with probability less than $1/(6K)$.
	It then follows by taking the union bound over $k=1,\dots K,$ that $Q$ satisfies both (1) and (2) of the statement of Lemma~\ref{lemma:sample} with probability at least $1-1/6-1/6 = 2/3$.

	We only prove that $\PP(E)\leq 1/(6K)$.
    An analogous argument shows that $\PP(E')\leq 1/(6K)$.
    Suppose that event $E$ happens and let $\cV'\in\PartitionsOf{Q}{k}$ be as in (2).
    Define $S=\quot{\Gamma[Q]}{\cV'}$ and let $R\in\cR$ be such that 
    $\dist(R,S)\leq \eps/4$. Since 
    $\Gamma[Q]\in\AlmostReducible{S}{0}$ and $\Gamma \notin \AlmostReducible{S}{\eps}$, 
    the triangle inequality implies that
    $\Gamma[Q]\in\AlmostReducible{R}{\eps/4}$ and 
    $\Gamma\notin \AlmostReducible{R}{3\eps/4}$. 
    Therefore    
    \begin{align*}
    \PP(E) &\leq \PP\(\text{$\exists R\in\cR: \Gamma[Q]\in\AlmostReducible{R}{\eps/4}$ and  $\Gamma\notin \AlmostReducible{R}{3\eps/4}$}\).
    \end{align*}
    We claim that if $\Gamma\notin \AlmostReducible{R}{3\eps/4}$, then $\dist(\Gamma,\AlmostReducible{R}{\eps/4})>\eps/4$.
    To show this, consider the contrapositive statement and let $\Gamma'\in \AlmostReducible{R}{\eps/4}$ such that $\dist(\Gamma,\Gamma')\leq \eps/4$. 
    By definition there is an equipartition $\cV' \in \PartitionsOf{V}{k} $ such that $\dist(\quot{\Gamma'}{\cV'}, R)\leq \eps/4$. 
    In addition Lemma~\ref{prop:distquots} implies that \[\dist(\quot{\Gamma}{\cV'}, \quot{\Gamma'}{\cV'})\leq \dist(\Gamma,\Gamma')+ 2k/(|V|-2k)\leq \eps/4+2K/(q-2K) \leq \eps/2.\]
    It follows from the triangle inequality that $\dist(\quot{\Gamma}{\cV'},R)\leq 3\eps/4$.
    Therefore
    \begin{align*}
     \PP(E) &\leq \PP\(\exists R\in\cR: \Gamma[Q]\in\AlmostReducible{R}{\eps/4} 
		    \text{ and } \dist(\Gamma,\AlmostReducible{R}{\eps/4})>\eps/4\) \\
		    &\leq \sum_{R \in \cR} \PP \( \Gamma[Q]\in\AlmostReducible{R}{\eps/4} 
		    \text{ and } \dist(\Gamma,\AlmostReducible{R}{\eps/4})>\eps/4\)\\
		    &\leq \delta|\cR| \leq 1/(6K) 
    \end{align*}
    where the last line comes from Corollary~\ref{cor:fms}(2) with $\dist(\Gamma,\AlmostReducible{R}{\eps/4}) > \eps/4$.
    
    \end{proof}

We now deduce Theorem~\ref{th:estimatequotfunc} from Lemma~\ref{lemma:sample}.

    \begin{proof}[Proof of Theorem~\ref{th:estimatequotfunc}]
        Fix~$\eps>0$ and an input graph~$\Gamma\in\cG(V)$. Let~$q$ be as in 
        Lemma~\ref{lemma:sample} with input~$K$ and~$\eps/c$.
        Choose $Q$ uniformly from all subsets of $V$ of size $q$ and set $\bar{\Gamma} = \Gamma[Q]$.
        We will show that  
        $z(\Gamma) = z(\bar{\Gamma}) \pm \eps$ with probability at least $2/3$. 
        
        Let~$\cV\in\Pi_K(V)$ be an equipartition of~$\Gamma$ such
        that~$z(\Gamma) = z^*(\quot{\Gamma}{\cV})$. By
        Lemma~\ref{lemma:sample}(1), with probability at least~$2/3$, there is
        a partition~$\bar{\cV}$ of~$\bar{\Gamma}$ such that
        $\dist(\quot{\Gamma}{\cV},\quot{\bar{\Gamma}}{\bar{\cV}}) <
        \eps/c$.
        By the second condition on $z^*$ in the statement of
        Theorem~\ref{th:estimatequotfunc}, we have
        $|z^{\ast}(\quot{\bar{\Gamma}}{\bar{\cV}}) - z^{\ast}
        (\quot{\Gamma}{\cV})| \leq \eps$,
        and therefore
        $z(\bar{\Gamma}) \leq z^{\ast}(\quot{\bar{\Gamma}}{\bar{\cV}})
        \leq z^{\ast} (\quot{\Gamma}{\cV}) + \eps = z(\Gamma)+\eps$.

        A symmetric argument {relying on Lemma~\ref{lemma:sample}(2)} shows that~$z(\Gamma) \leq z(\bar{\Gamma})+\eps$.
    \end{proof}

    \section{Concluding remarks}
    \label{sec:concluding-remarks}

    In this paper, we introduced the concept of $f$-recoverability of a graph
    property $\cP$. Using this concept, and the fact that any monotone property
    $\cP=\Forb(\cF)$ is recoverable for a function $f$ whose size is given by
    the Graph Removal Lemma, we found a probabilistic algorithm to estimate the
    number of $\cF$-free subgraphs of a large graph $G$ whose sample complexity does not depend on regularity.

    Being a new concept, little is known about $f$-recoverability itself, and we
    believe that it would be interesting to investigate this notion in more detail.
    For instance, in our proof that any monotone property $\Forb(\cF)$ is
    $f$-recoverable, we found $\eps$-recovering partitions $\cV$ that were
    $\gamma$-FK-regular (in fact, we showed that \emph{any} such partition is
    $\eps$-recovering), where~$\gamma(\eps)$ is chosen in such a way that the
    Removal Lemma applies. On the other hand, our discussion after Definition 3.5
    implies that the property of being $r$-colorable is $\eps$-recoverable with
    sample complexity $r/\eps$, and thus we may find an $\eps$-recovering
    partition whose size is less than the size required to ensure the existence of
    an FK-regular partition. It is natural to ask for properties that can be
    recovered by small partitions; more precisely, one could ask for a characterization
    of properties that are $f(\epsilon)$-recoverable for~$f(\epsilon)$ polynomial
    in~$1/\epsilon$.

    Here, we  restricted ourselves to monotone graph properties. We should
    mention that the parameter 
    \[\hat{z_{\cP}}(\Gamma) = \frac1{|V(\Gamma)|^2}\log_2|\{G\leq \Gamma: G\in \cP\}|\]
    might not even be estimable for arbitrary (non-monotone) properties $\cP$. 
    For instance, if $\cP$ is the hereditary property of graphs 
    having no independent sets of size three, then
    the complete graph~$K_n$ and the graph~$K_n-E(K_3)$, which is obtained from $K_n$ by removing the edges of a triangle, have
    quite a different number of spanning subgraphs satisfying~$\cP$,
    namely~$2^{n^2/4}$ and~$0$, respectively, although their edit distance is
    negligible.  It follows from \cite[Theorem 6.1]{Borgs2008} that~$\hat{z_{\cP}}$ is not estimable. 

    Nevertheless, the definition of $f$-recoverable can be extended to cope 
    with general hereditary properties, which, along with
    Theorem~\ref{th:estimatequotfunc}, provides a way of estimating other
    interesting hereditary properties.  In particular, this framework is used
    in a follow-up paper to estimate the edit distance to any fixed hereditary
    property with a sample complexity similar to the one obtained here. We should mention here
    that, given a monotone property $\Forb(\cF)$, the parameter $z_{\cF}$ is actually closely 
    related to the parameter $d_{\cF}\colon\Gamma\mapsto\dist(\Gamma,\Forb(\cF))$. In fact, 
    $\eps$-recovering partitions along with techniques analogous to the ones 
    used in \cite{BT1997} can be used to show that, for any  graph $\Gamma = (V, E)$, we have
    \[d_{\cF}(\Gamma) = \frac{2|E|}{|V|^2} -2z_{\cF}(\Gamma)\pm o(1),\]
      which implies that  estimating $z_{\cF}$ provides an indirect way 
    for estimating $d_{\cF}$.
    
    \bibliographystyle{amsplain}
\providecommand{\bysame}{\leavevmode\hbox to3em{\hrulefill}\thinspace}
\providecommand{\MR}{\relax\ifhmode\unskip\space\fi MR }
\providecommand{\MRhref}[2]{%
  \href{http://www.ams.org/mathscinet-getitem?mr=#1}{#2}
}
\providecommand{\href}[2]{#2}

    \end{document}